\newtheorem{lemma}{Lemma}
\newtheorem{theorem}{Theorem}
\newcommand{\ds}{\displaystyle}
\newcommand{\lp}{\left(}
\newcommand{\rp}{\right)}
\newcommand{\lcm}{{\rm lcm}}
\title{On the size of outerplanar graphs with positive Lin-Lu-Yau Ricci curvature}
\author{
Xiaonan Liu
\thanks{Vanderbilt University, Nashville, TN, 37240, ({\tt xiaonan.liu@vanderbilt.edu}).}\and
Linyuan Lu
\thanks{University of South Carolina, Columbia, SC 29208,
({\tt lu@math.sc.edu}). This author was supported in part by NSF grant DMS-2038080.} \and
Zhiyu Wang \thanks{Louisiana State University, Baton Rouge, LA, 70810 (zhiyuw@lsu.edu). This author was supported in part by LA Board of Regents grant LEQSF(2024-27)-RD-A-16.} 
}
\begin{document}
\maketitle
\begin{abstract}
In this paper, extending a result of Brooks et.al.~[\textit{arXiv:2403.04110}], 
we show that if an outerplanar graph $G$ with minimum degree at least $2$ has positive Lin-Lu-Yau curvature on every vertex pair, then $G$ has at most $10$ vertices, and this upper bound is sharp. 
\end{abstract}

\section{Introduction}\label{sec:ricci}
Ricci curvature plays an important role in the geometric analysis of Riemannian manifolds. There are many variants of Ricci curvature in discrete spaces e.g., the Bakry and Emery's ``lower Ricci curvature bound"~\cite{Bakry-Emery1985} and Ollivier's coarse Ricci curvature in metric spaces \cite{Ollivier} (also see \cite{Sammer}), etc. The first definition of Ricci curvature on graphs was introduced by Chung and Yau in~\cite{Chung-Yau96}. To obtain a good log-Sobolev inequality, they defined \textit{Ricci-flatness} in graphs. Later, Lin and Yau \cite{LY} gave a generalization of the lower Ricci curvature bound in the framework of graphs. In \cite{LLY}, Lin, Lu, and Yau modified Ollivier's Ricci curvature \cite{Ollivier} and defined a new variant of Ricci curvature on graphs, which does not depend on the idleness of the random walk. See e.g., \cite{Cho-Paeng2013, CKKLP2021, Paeng2012} and references within for more connections between these variants of Ricci curvature and other graph properties.

Infinite planar graphs are often treated as the discrete version of noncompact simply connected $2$-dimensional manifolds. Thus, it is natural to consider curvature of planar graphs. One of the most well studied notions of curvature in planar graphs is the \textit{combinatorial curvature}. 
Let $G$ be a simple connected planar graph that is $2$-cell embedded in the sphere, and let $V(G)$, $E(G)$, $F(G)$ be the set of vertices, edges, and faces of $G$ respectively. Given $v \in V(G)$, let $N_G(v)$ denote the neighborhood of $v$, i.e., $N_G(v) = \{u\in V(G): vu\in E(G)\}$ and let $d_G(v)= |N_G(v)|$. Moreover, define the closed neighborhood of $v$, denoted by $N[v]$, as $N[v]:=N(v)\cup \{v\}$.
The \emph{combinatorial curvature} of a vertex $v\in V(G)$ in $G$, denoted by $\phi_G(v)$, is defined by $\phi_G(v) = 1 - \frac{d_G(v)}{2} + \sum_{\sigma \in F_G(v)} \frac{1}{|\sigma|}$,
where $F_G(v)$ denotes the set of faces containing $v$, and $|\sigma|$ is the size of $\sigma$, i.e., the number of edges bounding $\sigma$. We often drop the subscript $G$ if it is clear from the context.

A graph $G$ is said to have positive combinatorial curvature, or (combinatorially) positively curved, if $\phi(v) > 0$ for all $v\in V(G)$.
In 2001, Higuchi \cite{Higuchi01} conjectured that if $G$ is a simple, connected, (combinatorially) positively curved graph embedded into a $2$-sphere with minimum degree $\delta(G)\geq 3$, then $G$ is finite. Higuchi's conjecture was verified by Sun and Yu \cite{Sun-Yu04} for cubic planar graphs and later confirmed by DeVos and Mohar \cite{DeVos-Mohar07} as follows.

\begin{theorem}[\cite{DeVos-Mohar07}]\label{thm:DM}
Suppose $G$ is a connected simple graph embedded into a $2$-dimensional topological manifold $\Omega$ without boundary and $G$ has minimum degree at least $3$. If $G$ has positive combinatorial curvature, then it is finite and $\Omega$ is homeomorphic to either a $2$-sphere or a projective plane. Moreover, if $G$ is not a prism, an antiprism, or one of their projective plane analogues, then $|V(G)| \leq 3444$.
\end{theorem}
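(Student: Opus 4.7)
My plan is to combine the discrete Gauss--Bonnet identity with a classification of vertex ``patterns'' and a rigidity argument identifying the exceptional families.

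First, I would sum $\phi(v)$ over all $v$. Using $\sum_v d(v)=2|E|$ and $\sum_v\sum_{\sigma\in F(v)}\tfrac{1}{|\sigma|}=|F|$ by double counting, one obtains
$$\sum_{v\in V(G)}\phi(v)=|V|-|E|+|F|=\chi(\Omega).$$
Positivity of $\phi$ forces $\chi(\Omega)>0$, so $\Omega$ is homeomorphic to a sphere ($\chi=2$) or projective plane ($\chi=1$), and $\sum\phi(v)\le 2$ in either case.

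Next, I would classify vertex patterns $(d;a_1,\ldots,a_d)$, with $a_i\ge 3$, by the value
$$\phi(v)=1-\frac{d}{2}+\sum_{i=1}^d\frac{1}{a_i}.$$
Since $\phi(v)>0$ forces $d\le 5$, only finitely many families contribute up to a free parameter. The patterns admitting arbitrarily small $\phi$ are essentially two: the \emph{prism pattern} $(3;4,4,n)$ and the \emph{antiprism pattern} $(4;3,3,3,n)$, each giving $\phi=1/n$. All remaining (``sporadic'') patterns take values in a discrete, bounded-below set, and a finite enumeration identifies the minimum positive sporadic value as $\tfrac{1}{1722}$, attained by $(3;3,7,41)$.

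Third, I would establish a rigidity lemma: if some vertex of $G$ carries the prism or antiprism pattern with parameter $n\ge 42$, then $G$ is exactly the $n$-prism, $n$-antiprism, or a projective plane analogue. The proof walks around the unique $n$-face through $v$: for $n\ge 42$, the only positive-curvature patterns containing a face of size $n$ are the two rigid ones and a short list whose propagation immediately forces a neighboring sporadic vertex, so a compatibility argument along consecutive edges of the $n$-face forces every vertex on it to carry the same rigid pattern, and the local structure then closes up globally into the stated graph. Consequently, if $G$ is not exceptional, no vertex can have $\phi<\tfrac{1}{1722}$ (such a vertex would carry a rigid pattern with $n>1722\ge 42$ and trigger the rigidity lemma). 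Hence $\phi(v)\ge \tfrac{1}{1722}$ for every $v$, and Gauss--Bonnet yields $|V|\le 3444$.

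The hardest step will be the rigidity lemma. Beyond the face-size matching around the large face, one must rule out alternative low-curvature patterns (such as $(3;3,6,n)$) as propagating globally on a closed surface, and carefully enumerate the projective plane analogues arising from free $\mathbb{Z}/2$ quotients of the prism and antiprism. The finite enumeration underpinning the bound $\tfrac{1}{1722}$ on sporadic patterns is tedious but conceptually routine by comparison.
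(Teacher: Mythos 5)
The paper does not prove this statement: it is quoted from DeVos and Mohar \cite{DeVos-Mohar07} as background, so there is no internal proof to compare against. Judged on its own, your outline reproduces the correct architecture of the known argument: discrete Gauss--Bonnet, the observation that $\phi(v)>0$ forces $d(v)\le 5$, the identification of the unbounded-parameter vertex patterns with $\phi=1/n$, the minimum sporadic value $\tfrac{1}{1722}$ attained at $(3;3,7,41)$, and the resulting bound $|V|\le 2\cdot 1722=3444$.

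There is, however, a genuine gap at the very first step: you sum $\phi(v)$ over all vertices and invoke Euler's formula, which presupposes that $G$ is finite and $\Omega$ is compact. Finiteness is a \emph{conclusion} of the theorem, not a hypothesis --- it is precisely Higuchi's conjecture, and establishing it is a substantial portion of DeVos and Mohar's paper. For an infinite $G$ the identity $\sum_v\phi(v)=\chi(\Omega)$ is unavailable (the sum need not converge, and Euler's formula fails on noncompact surfaces), so you cannot conclude $\chi(\Omega)>0$ this way; one needs a separate argument, e.g.\ a Gauss--Bonnet formula with boundary terms applied to an exhaustion by finite subcomplexes, showing that an infinite positively curved graph of minimum degree $3$ would accumulate total curvature exceeding $2$. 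Secondarily, the two steps you defer carry essentially all of the technical weight: the rigidity lemma for large faces must also dispose of the third unbounded family $(3;3,6,n)$ (which you list only as an afterthought, though it produces curvature $1/n$ just like the prism and antiprism patterns), and the enumeration certifying $\tfrac{1}{1722}$ must be actually carried out. As written, the proposal is a correct roadmap of the known proof rather than a proof.
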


The minimum possible constants for $|V(G)|$ in Theorem \ref{thm:DM} for $G$ embedded in a $2$-sphere and projective plane respectively was studied in \cite{RBK05, Nicholson-Sneddon11, Chen-Chen08, Zhang08, Oh17}. Recently, answering a question of DeVos and Mohar~\cite{DeVos-Mohar07}, Ghidelli \cite{Ghidelli17} improved the upper bound in Theorem \ref{thm:DM} to $208$, which is tight by examples constructed in \cite{Nicholson-Sneddon11, Oldridge17}.

In \cite{Lu-Wang2023+}, Lu and Wang initiated the study on the size of planar graphs with positive \textit{Lin--Lu--Yau} curvature, \textit{LLY curvature} for short, which is defined on the vertex pairs of a graph (see Section \ref{sec:LLY-curvature} for the definitions). A graph $G$ is called \textit{positively LLY-curved} if it has positive Lin--Lu--Yau curvature on every vertex pair of $G$. In~\cite{Lu-Wang2023+}, Lu and Wang established an analogue of DeVos and Mohar's result in the context of Lin--Lu--Yau curvature as follows.

\begin{theorem}\cite{Lu-Wang2023+}
Let $G$ be a simple positively LLY-curved planar graph $G$ with $\delta(G) \geq 3$. Then the maximum degree $\Delta(G) \leq 17$, and $|V(G)|\leq 17^{544}.$
\end{theorem}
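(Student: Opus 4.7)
My plan is to attack this in two stages: first prove $\Delta(G) \le 17$ by a local curvature computation around a vertex of maximum degree, then upgrade pointwise positivity to a uniform positive lower bound on edge curvature and finish with a discrete Bonnet--Myers inequality.

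For the maximum-degree bound, fix a vertex $v$ with $d(v) = \Delta$ and a neighbor $u$. Writing $\kappa(v,u)$ through the Wasserstein-distance limit between the neighborhood probability measures (as the idleness parameter tends to $1$), positivity of $\kappa(v,u)$ forces a nontrivial savings in the transport cost from the measure centered at $v$ to the measure centered at $u$. The key planar input is that $|N(v) \cap N(u)| \le 2$: three or more common neighbors together with $\delta \ge 3$ would yield a non-planar subdivision near the edge $vu$. Consequently, at most $2$ of the $\Delta$ neighbors of $v$ can be matched to $N(u)$ at distance $0$, while the rest of the mass must travel distance at least $1$ or $2$. Tracking these contributions carefully via Kantorovich duality and solving for the critical $\Delta$ at which the edge curvature first becomes nonpositive yields the threshold $\Delta(G) \le 17$.

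For the vertex-count bound, once $\Delta \le 17$ and $\delta \ge 3$, the edge-neighborhoods realizable in a planar graph fall into only finitely many isomorphism types, so the infimum $\kappa_0 := \inf_{xy \in E(G)} \kappa(x,y)$ is positive and depends only on $\Delta$. A quantitative Wasserstein estimate should give a lower bound of shape $\kappa_0 \ge 1/272$ (matching $2/\kappa_0 = 544$). Applying a discrete Bonnet--Myers theorem then yields $\operatorname{diam}(G) \le 2/\kappa_0 \le 544$, and the Moore-type bound $|V(G)| \le 1 + \Delta \sum_{i=0}^{D-1}(\Delta-1)^i$ with $D = \operatorname{diam}(G)$ delivers the final estimate $|V(G)| \le 17^{544}$.

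The main obstacle is the quantitative edge-curvature lower bound $\kappa_0 \ge 1/272$. This requires constructing, for every local edge configuration that can occur in a planar graph with $\delta \ge 3$ and $\Delta \le 17$, both an explicit optimal coupling and a matching $1$-Lipschitz function (via Kantorovich duality) to pin down $W_1$ exactly. Identifying the extremal configuration that determines $\kappa_0$ — and hence the exponent $544$ — among this large but finite family is the combinatorial heart of the argument; by contrast, the Bonnet--Myers step and the reduction to bounded $\Delta$ are comparatively standard once the transport plans are in hand.
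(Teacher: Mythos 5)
First, a caveat: the paper you are working from does not prove this statement; it is quoted from the reference [Lu--Wang, arXiv:2010.03716], so I am comparing your proposal to the argument in that source rather than to a proof in the present paper.

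The second half of your plan is essentially the right skeleton and matches the known argument: once $3\le \delta(G)\le\Delta(G)\le 17$, the Li--Lu type formula (reproduced as Lemma \ref{lem:coupling} and the displayed identity preceding it) shows that $\kappa_{\textrm{LLY}}(x,y)\cdot\lcm(d(x),d(y))$ is an integer, so positivity forces $\kappa_{\textrm{LLY}}(x,y)\ge 1/\lcm(d(x),d(y))\ge 1/\lcm(16,17)=1/272$; the Lin--Lu--Yau Bonnet--Myers theorem then gives $\mathrm{diam}(G)\le 2/\kappa_0\le 544$, and the Moore bound finishes. Your stated reason for $\kappa_0\ge 1/272$ (``finitely many local isomorphism types'') is vaguer than the integrality argument but salvageable; the exact constant $272=16\cdot 17$ really comes from the $\lcm$ of the denominators, not from an extremal configuration search.

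The genuine gap is in the degree bound. Your ``key planar input'' that adjacent vertices $u,v$ in a planar graph with $\delta\ge 3$ satisfy $|N(u)\cap N(v)|\le 2$ is false: repeatedly stacking a new vertex into a triangular face containing the edge $uv$ produces planar triangulations (hence $\delta\ge 3$) in which $uv$ lies in arbitrarily many triangles, so no forbidden subdivision arises. Moreover the logic is inverted for your purposes: common neighbors make $\kappa(u,v)$ \emph{larger} (they let mass sit still in the optimal coupling), so to force nonpositive curvature at a vertex $v$ of degree $\Delta$ you must \emph{locate} a neighbor $u$ for which $t(u,v)$ is small, not assert that every neighbor works. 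The correct route is that $G[N(v)]$ is outerplanar, whence $\sum_{u\in N(v)}t(u,v)=2e(G[N(v)])\le 4\Delta-6$, so some neighbor $u$ has $t(u,v)\le 3$; one then applies a $1$-Lipschitz test function (in the style of Lemma \ref{lem:exterior_at_least_3}) to bound $\kappa(u,v)$ from above in terms of $\Delta$, $d(u)$ and $t(u,v)\le 3$ and solves for the threshold $\Delta\le 17$. As written, your local computation rests on a false premise and would not compile into a proof of $\Delta(G)\le 17$; note also that if your claim $t(u,v)\le 2$ were true for all edges, the same computation would yield a bound well below $17$, which is another sign the premise cannot be what drives the stated threshold.
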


Very recently, Brooks, Osaye, Schenfisch, Wang and Yu~\cite{Brooks2024+} studied the maximum degree and maximum order of positively LLY-curved (maximally) outerplanar graphs. In particular, they showed the following.

\begin{theorem}\cite{Brooks2024+} \label{thm:maxdeg}
    Let $ G $ be a simple positively LLY-curved outerplanar graph with $ \delta(G) \geq 2 $. Then $ \Delta(G) \leq 9$ and the upper bound is sharp.
\end{theorem}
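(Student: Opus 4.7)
The plan is to prove $\Delta(G)\le 9$ by contradiction and to establish sharpness via an explicit construction. Suppose $G$ satisfies the hypotheses and contains $v\in V(G)$ with $d:=d(v)\ge 10$. Fix an outerplanar embedding and list the neighbors of $v$ cyclically as $u_1,\ldots,u_d$, with the outer face occupying the wedge between $u_d$ and $u_1$; each remaining wedge is bounded by an interior face $F_i$ whose boundary contains $v,u_i,u_{i+1}$ consecutively.

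The first, structural, step is the containment $N(u_i)\cap N(v)\subseteq\{u_{i-1},u_{i+1}\}$ for every interior $u_i$ ($2\le i\le d-1$): a chord $u_iu_j$ with $|i-j|\ge 2$ together with the edges $vu_i,vu_j$ would create a triangle enclosing some $u_k$ strictly in its interior, contradicting outerplanarity. In particular $|N(u_i)\cap N(v)|\le 2$, so combined with $\delta\ge 2$ each interior $u_i$ has at least $d(u_i)-3$ \emph{private} neighbors in $V(G)\setminus N[v]$. A double-counting via the outerplanar edge bound $|E(G[N[v]])|\le 2(d+1)-3$ then guarantees at least one interior $u_i$ admitting a private neighbor while $d(u_i)$ remains small.

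The second step is a local upper bound on $\kappa(v,u_i)$. Using the Kantorovich dual characterization of the LLY curvature (equivalently, the $\alpha\to 1^-$ limit of the Ollivier curvature), I would test with a $1$-Lipschitz function $f$ vanishing on $N[v]$ and equal to $1$ on the private neighbors of $u_i$, extended $1$-Lipschitz elsewhere. This yields an upper bound for $\kappa(v,u_i)$ that is an explicit decreasing function of $d(v)$ and of the fraction of $u_i$'s neighbors that are private. Plugging in $d(v)\ge 10$, $|N(u_i)\cap N(v)|\le 2$, and the private-neighbor guarantee from Step~1, the bound becomes non-positive, contradicting the hypothesis and establishing $\Delta(G)\le 9$.

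For sharpness I would exhibit a $2$-connected outerplanar graph $H^*$ with a degree-$9$ vertex, $\delta(H^*)\ge 2$, and $\kappa(x,y)>0$ on every pair $x,y\in V(H^*)$. A natural candidate is a ``fan-plus'' construction built from $v$ joined to a path $u_1\cdots u_9$, augmented with short attached triangles so that every $|N(u_i)\cap N(v)|$ is pushed up to $2$; verifying LLY-positivity then reduces to a bounded finite Wasserstein computation over the edges and short non-edge pairs of $H^*$. The main obstacle I foresee is the tightness of the local bound in Step~2: when many $F_i$ are triangles, $|N(u_i)\cap N(v)|=2$ and the upper bound is nearly tight, so the averaging over interior $i$ must be refined — probably via a sharper outerplanar edge bound applied to $G[N(v)]$ together with a case analysis on the number of triangular $F_i$'s — in order to drive the contradiction threshold down exactly to $d=10$.
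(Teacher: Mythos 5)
This theorem is quoted from \cite{Brooks2024+}; the present paper contains no proof of it, so there is nothing internal to compare against. Judged on its own terms, your proposal has a genuine gap, and it is concentrated in exactly the two steps you flag as ``structural'' and ``local.''

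The structural step fails on the simplest relevant example. Take the fan $F_{10}$: a vertex $v$ joined to all of a path $u_1u_2\cdots u_{10}$. This is outerplanar, has $\delta=2$ (the ends $u_1,u_{10}$ have degree $2$), and $d(v)=10$; yet \emph{no} neighbor of $v$ has a private neighbor outside $N[v]$, and $|E(G[N[v]])|=19=2\cdot 11-3$, so your outerplanar edge bound is attained with equality and the double-counting cannot ``guarantee at least one interior $u_i$ admitting a private neighbor.'' Worse, the interior edges $vu_i$ are not the pairs that go non-positive at $d(v)=10$: with $d(u_i)=3$, two common neighbors, and the configuration of Lemma \ref{lem:pos_degree_pair}(ii), one gets $\kappa(v,u_i)\ge \frac{7}{10}-\frac{2}{3}>0$. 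The curvature actually dies at the \emph{end} of the fan: for the degree-$2$ vertex $u_1$ one checks (e.g., with the $1$-Lipschitz function $f(u_1)=f(u_2)=0$, $f(v)=f(u_3)=1$, $f(u_j)=2$ for $j\ge 4$, which gives $\Delta f(u_1)=\Delta f(v)=\tfrac12$) that $\kappa(v,u_1)\le 0$. So any correct proof must analyze the low-degree neighbors at the ends of the paths of $G[N(v)]$ and, more generally, carry out a case analysis on how $N(v)$ decomposes; your plan never looks there.

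The local step is also not a valid certificate. A function $f$ that ``vanishes on $N[v]$'' has $f(v)=f(u_i)$, hence $\nabla_{u_iv}f=0$, so it is not admissible in the characterization of Theorem \ref{thm:curvature_laplacian} (which requires $\nabla_{yx}f=1$) and bounds nothing about $\kappa(v,u_i)$; dually, it is not feasible for the Kantorovich problem normalized on the pair $\{v,u_i\}$. To salvage the argument you would need an admissible test function (of the staircase type used in Lemmas \ref{lem:exterior_at_least_3}--\ref{lem:4-facical-cycle_structure}) applied to the correct critical pair, together with the case analysis above; as written, the contradiction at $d(v)\ge 10$ does not materialize. The sharpness half is fine in spirit (a degree-$9$ fan-type example works and is consistent with Lemma \ref{lem:pos_degree_pair}), but it too requires a finite verification you have only sketched.
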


For maximally outerplanar graphs $G$, they are able to show that $|V(G)|\leq 10$ and classify all the positively LLY-curved maximal outerplanar graphs.
\begin{theorem}\cite{Brooks2024+}\label{thm:max_outerplanar}
    Let $G$ be a simple positively LLY-curved maximal outerplanar graph. Then $|V(G)| \leq 10$ and the upper bound is sharp.
\end{theorem}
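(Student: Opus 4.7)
The plan is to combine the degree bound from Theorem \ref{thm:maxdeg} with a structural analysis based on the weak dual tree of the triangulation. Since $G$ is maximal outerplanar, its outer face is a Hamilton cycle $C = v_1 v_2 \cdots v_n v_1$, every inner face is a triangle, and $|E(G)| = 2n-3$. The \emph{weak dual} $T$ of $G$---whose vertices are the inner triangular faces, with edges between faces sharing a chord---is a tree on $n-2$ vertices of maximum degree at most $3$. Leaves of $T$ correspond to triangles with two consecutive boundary edges, and each such triangle contains a vertex of degree $2$ in $G$ (an \emph{ear}). By Theorem \ref{thm:maxdeg}, $\Delta(G) \leq 9$, so the degree sequence is already quite constrained.

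The key technical step is to translate the positive LLY-curvature condition on each edge into local combinatorial inequalities. For an edge $\{u,v\}\in E(G)$, the Lin--Lu--Yau curvature $\kappa(u,v)$ is computed via optimal transportation between probability measures supported on $N(u)$ and $N(v)$. In our setting $\{u,v\}$ lies in either one triangle (boundary edge) or two triangles (chord), so one can write down closed-form upper bounds for $\kappa(u,v)$ in terms of $d(u)$, $d(v)$, and the structure of the two neighborhoods outside the triangle(s). Requiring $\kappa(u,v)>0$ on every edge then forces the degrees of vertices near any ear to be small.

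With these local constraints in hand, I would run a case analysis along $T$. Starting from a leaf of $T$ (an ear $v$ of $G$ with neighbors $u_1, u_2$), the curvature conditions on $\{v,u_1\}$ and $\{v,u_2\}$ upper-bound $d(u_1)$ and $d(u_2)$. Walking inward along $T$, each successive chord imposes a further restriction on the degree of the newly encountered vertex and on how the triangulation may extend. For any path in $T$ between two leaves, this analysis bounds the length of the path by a small constant; combined with the maximum degree bound $\Delta(T)\leq 3$, this yields $|V(T)| = n - 2 \leq 8$, and hence $n \leq 10$. For sharpness, one exhibits a concrete positively LLY-curved maximal outerplanar graph on $10$ vertices.

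The principal obstacle is the translation step: computing the LLY curvature requires solving a Wasserstein transport problem whose optimal plans depend on the global neighborhoods of $u$ and $v$. While the local structure in a triangulation is rigid, obtaining bounds tight enough to yield exactly $n \leq 10$ (rather than, say, $n \leq 100$) requires carefully solving small transportation problems for each relevant degree configuration and checking that the extremal cases coincide with the explicit $10$-vertex example. I expect this case analysis, together with designing the inductive walk along $T$ so that the local restrictions compound correctly, to be the most delicate portion of the proof.
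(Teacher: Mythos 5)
This statement is quoted from \cite{Brooks2024+} and is not proved in the present paper, so there is no in-paper argument to compare against; I can only assess your proposal on its own terms. The overall strategy---exploit the weak dual tree $T$ of the triangulation, use $\Delta(G)\le 9$ from Theorem~\ref{thm:maxdeg}, convert $\kappa(u,v)>0$ on each edge into local degree constraints near ears, and propagate these inward---is a reasonable and standard way to attack maximal outerplanar graphs, and the structural facts you state ($T$ is a tree on $n-2$ vertices with $\Delta(T)\le 3$, leaves of $T$ are ears) are correct. But as written this is a plan, not a proof: the entire quantitative content (the closed-form curvature upper bounds for boundary edges and chords, and the case analysis that compounds them along $T$) is deferred, and you yourself flag it as the delicate part. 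Nothing in the proposal actually establishes that any specific configuration is forbidden, so the bound $n\le 10$ is not derived.

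There is also a concrete logical gap in the final counting step. You claim that bounding the length of every leaf-to-leaf path in $T$ by a small constant, ``combined with $\Delta(T)\le 3$,'' yields $|V(T)|\le 8$. A tree of maximum degree $3$ and diameter $d$ can have on the order of $3\cdot 2^{d-1}$ vertices, so a diameter bound alone does not give $|V(T)|\le 8$ unless the diameter is forced down to about $3$, and even then branching configurations (e.g.\ a central triangle with three pendant fans) must be excluded separately. In other words, the curvature constraints would have to control not just the depth of $T$ but also its branching, and your outline does not address how that happens. To make this rigorous you would need to (i) actually solve the small transportation problems for each degree configuration adjacent to an ear and to a chord, in the spirit of Lemmas~\ref{lem:pos_degree_pair}--\ref{lem:deg2_vtx} of this paper, (ii) show that these constraints eliminate every tree $T$ on $9$ or more nodes, including the branching ones, and (iii) exhibit an explicit positively curved $10$-vertex maximal outerplanar graph for sharpness (asserted but not given). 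Until those steps are carried out, the proposal does not constitute a proof of the theorem.
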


Is this paper, we extend Theorem \ref{thm:max_outerplanar} to the class of all outerplanar graphs (not necessarily edge-maximal) with minimum degree $2$ and classify all simple positively LLY-curved outerplanar graphs with minimum degree $2$. We first establish the result when $G$ is $2$-connected.

\begin{theorem}\label{thm:outerplanr_2_connected_max_order}
    Let $G$ be a simple $2$-connected positively LLY-curved outerplanar graph. Then $|V(G)|\leq 10$ and the upper bound is sharp.
\end{theorem}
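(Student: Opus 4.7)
The plan is to exploit the fact that a $2$-connected outerplanar graph $G$ has a Hamilton cycle on its outer face, with every remaining edge being a non-crossing chord. Writing $n = |V(G)|$ and letting $F_1,\dots,F_k$ be the internal faces of lengths $\ell_1,\dots,\ell_k$, the weak dual is a tree with $k$ nodes and $k-1$ chord-edges, so a standard counting of edges gives $\sum_{i=1}^k \ell_i = n + 2(k-1)$, equivalently
\[
n \;=\; 2 + \sum_{i=1}^k (\ell_i - 2).
\]
Thus bounding $n$ reduces to controlling the internal face sizes $\ell_i$ and the number of internal faces $k$, both of which should be forced by the positive LLY curvature hypothesis.

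The first step is a face-size lemma: I would show that every internal face of $G$ has length at most some small constant (most likely $4$, possibly $5$). To prove this, on a long internal face $F_i$ I would pick a well-chosen edge $xy$ (a chord shared with a neighboring face, or a boundary edge of $F_i$) and directly estimate $\kappa(x,y)$. The point is that when $F_i$ is long, the two endpoints of $xy$ have few common neighbors relative to their degrees, so any transport plan between the uniform measures on $N[x]$ and $N[y]$ is forced to move a positive amount of mass across distance $\geq 2$; bounding the Wasserstein-$1$ cost from below then yields $\kappa(x,y)\leq 0$, a contradiction.

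Given the face-size lemma, I would reduce to the maximal outerplanar case. Starting from $G$, iteratively triangulate each non-triangular internal face by adding a carefully chosen chord --- e.g.\ one incident to a minimum-degree vertex on that face --- and verify that this chord addition preserves positivity of $\kappa$ on every vertex pair whose local neighborhood is affected. Since only faces of length $\leq 4$ (or $5$) need to be split, and each such split alters only a bounded local neighborhood, the verification is a finite case analysis. This produces a maximal outerplanar graph $G^\ast$ with the same vertex set that is still positively LLY-curved, so Theorem~\ref{thm:max_outerplanar} yields $n = |V(G^\ast)|\leq 10$. Sharpness is inherited from Theorem~\ref{thm:max_outerplanar} since every maximal outerplanar graph on at least three vertices is $2$-connected.

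The main obstacle is the triangulation step: LLY curvature is not monotone under edge addition in general, because adding a chord simultaneously changes the neighborhood distributions $\mu_x,\mu_y$ and the shortest-path metric used in the optimal transport, and either change could destroy positivity at some pair. If the direct triangulation argument proves too fragile, my fallback plan is to avoid invoking Theorem~\ref{thm:max_outerplanar} altogether and argue directly: combine the face-size lemma with the maximum-degree bound $\Delta(G)\leq 9$ from Theorem~\ref{thm:maxdeg} to bound $k$ via the structure of the weak dual tree, and then plug into the identity $n = 2 + \sum(\ell_i-2)$ to rule out $n\geq 11$ by a finite case analysis on the shapes of internal faces meeting at vertices of high degree.
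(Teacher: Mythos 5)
Your opening reduction and your proposed face-size lemma are on the right track: the paper's Lemma~\ref{lem:4-facical-cycle_structure} proves exactly such a statement (every internal facial cycle other than the outer cycle has length exactly $4$, with strong constraints on which of its edges are exterior). The genuine gap is the triangulation step, and it is the one you yourself flag. Adding a chord $v_1v_3$ to a $4$-face raises $d(v_1)$ and $d(v_3)$ by one, and in the Lin--Lu--Yau setting enlarging a degree spreads the unit mass at that vertex over more neighbours, which typically pushes curvature \emph{down}; there is no monotonicity under edge addition to appeal to, and you do not set up the ``finite case analysis'' that would re-verify positivity for every edge incident to $v_1$ or $v_3$ (whose second neighbourhoods are not controlled by the face-size lemma alone). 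The paper sidesteps this by performing a different operation: it \emph{suppresses} the degree-two vertex $v_0$ of the $4$-face, i.e.\ deletes $v_0$ and adds $v_1v_3$, so that $d(v_1)$ and $d(v_3)$ are unchanged. Lemma~\ref{lem:suppressing} then gives a clean monotonicity statement via Lipschitz extension (set $f(v_0):=f'(v_3)$), and only the single new edge $v_1v_3$ requires a fresh curvature computation. Since suppression decreases $|V(G)|$ by one, the paper's argument is an induction on $n$ with a computer-verified base case at $n=11$, rather than a reduction to the maximal outerplanar Theorem~\ref{thm:max_outerplanar}. If you want to salvage your plan, the fix is to replace ``add a chord'' by ``suppress the degree-two vertex of the $4$-face'' (such a vertex exists in all cases except the single graph $G_8$, which has only $8$ vertices), and to accept that the resulting induction bottoms out in a finite check rather than in Theorem~\ref{thm:max_outerplanar}.

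Your fallback route does not close the gap either. With all internal faces of length at most $4$, the identity $n=2+\sum_i(\ell_i-2)$ gives only $n\le 2+2k$, so you would need $k\le 4$ internal faces; but already a positively curved maximal outerplanar graph on $10$ vertices has $k=8$ triangular faces, and the number of internal faces is not controlled by $\Delta(G)\le 9$ together with the face sizes --- a long ``strip'' of small faces has bounded degree, weak dual a path, and unboundedly many faces. Bounding $k$ is essentially equivalent to bounding $n$, so this version of the argument is circular: some global use of the curvature hypothesis (the paper's induction-by-suppression, or the transport estimates on edges of long strips) is unavoidable.
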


Using Theorem \ref{thm:outerplanr_2_connected_max_order}, we then extend the result to all positively LLY-curved outerplanar graphs with minimum degree at least $2$.

\begin{theorem}\label{thm:outerplanar_max_order}
  Let $G$ be a simple  positively LLY-curved outerplanar graph with $\delta(G)\ge 2$. Then $|V(G)|\leq 10$ and the upper bound is sharp.
\end{theorem}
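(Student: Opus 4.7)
The plan is to reduce to Theorem~\ref{thm:outerplanr_2_connected_max_order} through a careful analysis of the block-cut structure of $G$. If $G$ is already 2-connected the conclusion follows immediately, so assume that $G$ has a cut vertex, and work with the block-cut tree $T$ of $G$. Since $\delta(G)\ge 2$, no leaf block of $T$ can be a bridge (its non-cut endpoint would have degree $1$), so every leaf block is a 2-connected outerplanar subgraph on at least three vertices; bridges can occur only as internal blocks between two cut vertices. By Theorem~\ref{thm:maxdeg}, every cut vertex $v$ satisfies $\deg_G(v)\le 9$, which, together with the fact that each 2-connected block through $v$ contributes at least $2$ to $\deg_G(v)$, bounds the number of blocks through any single cut vertex.

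The heart of the argument is to show that at every cut vertex $v$ the local block structure is severely restricted. I would pick a cut vertex $v$ and a neighbor $x$ of $v$ in some block through $v$, and compute the Lin--Lu--Yau curvature $\kappa(v,x)$ directly from the definition, that is, from the $1$-Wasserstein distance between the lazy random walks at $v$ and at $x$. Since $v$ is a cut vertex, any optimal transport from the walk at $v$ to that at $x$ must move substantial mass across the bottleneck at $v$, and any mass originating in a block not containing $x$ must travel distance at least~$2$ to reach the support of the walk at $x$. Balancing these costs against $d(v,x)=1$ shows that the requirement $\kappa(v,x)>0$ restricts how many and what kind of blocks may sit at $v$: for example, four triangles at $v$, or several triangles together with another block, already yield $\kappa(v,x)\le 0$, and similar obstructions arise when a block at $v$ is a longer cycle.

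Combining these local restrictions with Theorem~\ref{thm:outerplanr_2_connected_max_order} (which bounds any single 2-connected block by $10$ vertices), one can enumerate the admissible global block structures and verify $|V(G)|\le 10$ in each case; sharpness is inherited from Theorem~\ref{thm:outerplanr_2_connected_max_order}, whose extremal example is 2-connected. The main technical obstacle is the curvature computation at a cut vertex: the optimal transport may route mass through several neighbors of $v$ and use triangles at $v$ as shortcuts, so determining precisely which configurations allow positive curvature requires a careful case analysis on the block types (triangle, longer cycle, bridge) incident to $v$. Once this is complete, the block-cut tree of $G$ is forced to be either a single 2-connected block or a small structure with at most a few triangles around a single cut vertex, and $|V(G)|\le 10$ follows.
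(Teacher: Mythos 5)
Your overall strategy---localize at cut vertices, restrict the blocks that can meet there, and combine with Theorem~\ref{thm:outerplanr_2_connected_max_order}---is a legitimate-sounding plan, but as written it has a genuine gap at exactly the step that carries all the difficulty. Knowing that each $2$-connected block has at most $10$ vertices does not bound $|V(G)|$: two blocks sharing a cut vertex could a priori have $19$ vertices, and a chain of small blocks could be arbitrarily long. The entire burden therefore falls on your claim that ``one can enumerate the admissible global block structures and verify $|V(G)|\le 10$ in each case,'' and neither the local curvature obstructions you invoke (e.g.\ that four triangles at $v$, or triangles plus another block, force $\kappa(v,x)\le 0$) nor the resulting enumeration is actually carried out. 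These computations are delicate---you yourself note that optimal transport can use triangles at $v$ as shortcuts---and without them the argument does not close. You also leave internal bridges unhandled: a bridge between two cut vertices is an exterior edge whose endpoints have no common neighbor, which is what actually rules it out (via Lemma~\ref{lem:exterior_at_least_3}), not merely the degree condition.

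The paper avoids this global bookkeeping entirely with a different idea: at a cut vertex $x$, the two neighbors of $x$ on the outer walk lying in distinct blocks are forced to have degree $2$ (Lemma~\ref{lem:exterior_at_least_3}) with adjacent neighbors (Lemma~\ref{lem:deg2_vtx}); one then \emph{adds} an edge between two such degree-$2$ vertices and verifies, via Lemma~\ref{lem:pos_degree_pair} and explicit Lipschitz functions, that the new graph is still outerplanar and positively curved. Iterating produces a $2$-connected positively curved outerplanar graph on the \emph{same vertex set}, so Theorem~\ref{thm:outerplanr_2_connected_max_order} applies to all of $G$ at once and gives $|V(G)|\le 10$ with no case enumeration over block trees. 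If you want to rescue your approach, you would need to supply the cut-vertex curvature computations in full and then prove a genuinely global bound on the union of the blocks, which is substantially more work than the reduction the paper uses.
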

Note that the condition $\delta(G)\ge 2$ in Theorem~\ref{thm:outerplanar_max_order} is necessary as any star $K_{1,t}$ is positively LLY-curved.

\medskip

{\noindent\bf Organization and Notation.} In Section \ref{sec:LLY-curvature}, we give the definition of Lin-Lu-Yau Ricci curvature.
In Section \ref{sec:prelims}, we will show some lemmas and properties of $2$-connected positively LLY-curved outerplanar graphs, which will be used in the proof of Theorem \ref{thm:outerplanr_2_connected_max_order} and Theorem \ref{thm:outerplanar_max_order}. We prove Theorem \ref{thm:outerplanr_2_connected_max_order} and Theorem \ref{thm:outerplanar_max_order} in Section \ref{sec:max_order_outerplanar}. In the remaining of the paper, we often simply say a graph $G$ is positively curved if it is positively LLY-curved. We use $\kappa_{\textrm{LLY}}(x,y)$ to denote the LLY curvature of the vertex pair $\{x,y\}$. We say an edge $xy\in E(G)$  has positive LLY curvature if $\kappa_{\textrm{LLY}}(x,y)>0$. When there is no confusion, we may just use $\kappa(x,y)$ to denote $\kappa_{\textrm{LLY}}(x,y)$. Moreover, when the vertex pair $\{x,y\}$ is present in two different graphs $G$ and $G'$, we use the notation $\kappa_G(x,y)$ and $\kappa_{G'}(x,y)$ to refer to the LLY curvature of $\{x,y\}$ in $G$ and $G'$ respectively.  Given an outerplanar graph $G$, we say that an edge $e$ is an \textit{exterior} edge if $e$ lies on the boundary of the outer face of $G$, and \textit{interior} edge otherwise.

\section{Lin--Lu--Yau Ricci curvature}\label{sec:LLY-curvature}
 
We adopt the same notation as \cite{LLY}.
A probability distribution (over the vertex set $V(G)$) is a mapping $m: V(G)\to [0,1]$ satisfying $\sum_{x\in V(G)} m(x) = 1$.
Suppose two probability distributions $m_1$ and $m_2$ have finite support. A {\em coupling} between $m_1$ and $m_2$ is a mapping $A: V(G)\times V(G) \to [0,1]$ with finite support so that 
$\sum_{y \in V(G)} A(x,y) = m_1(x) \textrm{ for every $x\in V(G)$ and } \sum_{x\in V(G)} A(x,y) = m_2(y)$ for every $y\in V(G)$. The \textit{Wasserstein-1 transportation distance} between two probability distributions $m_1$ and $m_2$ is defined as follows:
$$W(m_1, m_2) = \inf_A \sum_{x,y\in V(G)} A(x,y) d(x,y),$$
where the infimum is taken over all couplings $A$ between $m_1$ and $m_2$, and $d(x,y)$ is the graph distance between $x$ and $y$. A random walk $m$ on $G=(V,E)$ is defined as a family of probability measures $\{m_v(\cdot)\}_{v\in V(G)}$ such that $m_v(u) = 0$ for all $uv \notin E(G)$. It follows that  $m_v(u) \geq 0$ for all $v,u\in V(G)$ and $\sum_{u\in N[v]} m_v(u) = 1$ for all $v\in V(G)$. 

In~\cite{Ollivier}, Ollivier defined the \textit{coarse Ricci curvature} $\kappa: \binom{V(G)}{2} \to \mathbb{R}$ of a vertex pair in $G$ equipped with a random walk $m$ as follows:
$$\kappa_{\textrm{Ollivier}}(x,y) = 1 - \frac{W(m_x, m_y)}{d(x,y)}.$$
One natural choice of random walks in graphs is the \textit{$\alpha$-lazy random walk} ($0\leq \alpha < 1$) defined as 
$$
m_x^{\alpha}(v) = \begin{cases} 
                        \alpha & \textrm{ if $v=x$,}\\
                        (1-\alpha)/d(x) &\textrm{ if $v\in N(x)$,}\\
                        0 & \textrm{ otherwise.}
                    \end{cases}
$$
In \cite{LLY}, Lin, Lu and Yau defined the Ricci curvature of a vertex pair $\{x,y\}$ in $G$, denoted by $\kappa_{\textrm{LLY}}(x,y)$, based on the $\alpha$-lazy random walk in $G$ as $\alpha$ goes to $1$, i.e., $$\kappa_{\textrm{LLY}}(x,y) = \ds\lim_{\alpha \to 1} \frac{\kappa_{\alpha}(x,y)}{(1-\alpha)},$$
where $\kappa_{\alpha}(x,y) = 1 - \frac{W(m_x^{\alpha}, m_y^{\alpha})}{d(x,y)}$. 
A graph $G$ is called \textit{positively LLY-curved} or positively curved, for short, if $\kappa_{LLY}(x,y) >0$ for all $x \neq y \in V(G)$. 

In \cite{MW}, M\"{u}nch and Wojciechowski gave a limit-free formulation of the Lin-Lu-Yau Ricci curvature using \textit{graph Laplacian}. 

\begin{theorem}\label{thm:curvature_laplacian}\cite{MW} (Curvature via the Laplacian) Let $G= (V,E)$ be a simple graph and let $x \neq y \in V(G)$. Then 

\begin{equation*}
    \kappa_{\textrm{LLY}}(x,y) = \inf_{\substack{f \in Lip(1)\\ \nabla_{yx}f = 1}} \nabla_{xy} \Delta f, 
\end{equation*}
where $\nabla_{xy}f=\frac{f(x)-f(y)}{d(x,y)}$ and $\Delta f(u)=\frac{1}{d(u)}\sum_{v\in N(u)} (f(v)-f(u))$. 
\end{theorem}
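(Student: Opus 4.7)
The plan is to reduce the Wasserstein distance to its Kantorovich--Rubinstein dual, carry out a direct Laplacian computation on the lazy measures $m_x^\alpha$, and then take $\alpha\to 1^-$. By Kantorovich--Rubinstein duality,
\[
W(m_x^\alpha, m_y^\alpha) \;=\; \sup_{f\in \text{Lip}(1)} \sum_{v\in V(G)} f(v)\bigl(m_x^\alpha(v)-m_y^\alpha(v)\bigr).
\]
The first step is the straightforward identity $\sum_v f(v) m_x^\alpha(v)=\alpha f(x)+(1-\alpha)\cdot\frac{1}{d(x)}\sum_{u\sim x} f(u)=f(x)+(1-\alpha)\Delta f(x)$, which recasts the distance as
\[
\frac{W(m_x^\alpha,m_y^\alpha)}{d(x,y)}\;=\;\sup_{f\in \text{Lip}(1)}\bigl[\nabla_{xy}f+(1-\alpha)\nabla_{xy}\Delta f\bigr],
\]
and hence gives the exact formula
\[
\frac{\kappa_\alpha(x,y)}{1-\alpha}\;=\;\inf_{f\in \text{Lip}(1)}\left[\frac{1-\nabla_{xy}f}{1-\alpha}-\nabla_{xy}\Delta f\right].
\]

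Next I would send $\alpha\to 1^-$ and argue that the infimum localizes onto the feasible set $\{f\in \text{Lip}(1):\nabla_{xy}f=1\}$. Indeed, every $f\in\text{Lip}(1)$ satisfies $\nabla_{xy}f\le 1$, so the first bracket term is nonnegative, and for any $f$ with $\nabla_{xy}f<1$ it diverges to $+\infty$ as $\alpha\to 1$; on the remaining feasible set the bracket equals $-\nabla_{xy}\Delta f$. The substitution $f\mapsto -f$ then converts the constraint $\nabla_{xy}f=1$ to $\nabla_{yx}f=1$ and turns $-\nabla_{xy}\Delta f$ into $\nabla_{xy}\Delta f$, yielding the claimed formula.

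The main technical obstacle is justifying the exchange of the limit and the infimum. In the finite-graph setting relevant to this paper, one may quotient out additive constants and restrict to $f(x_0)=0$ for a fixed base vertex, whereupon $\text{Lip}(1)$ becomes a compact convex subset of a finite-dimensional space and the objective is linear, so standard compactness/semicontinuity arguments yield the exchange. The matching upper bound is easy: take an $\varepsilon$-near optimal $f$ with $\nabla_{xy}f=1$ and plug it directly into the $\kappa_\alpha$ formula. For the general (infinite) case treated by M\"unch and Wojciechowski, one additionally needs a perturbation argument, nudging a near-optimal $f$ to exactly satisfy $\nabla_{xy}f=1$ while keeping it $1$-Lipschitz and changing $\nabla_{xy}\Delta f$ by only $o(1)$; this careful perturbation is the only genuinely delicate ingredient.
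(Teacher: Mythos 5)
The paper offers no proof of this theorem; it is imported verbatim from M\"unch--Wojciechowski \cite{MW}, so there is no in-paper argument to compare against. Your self-contained derivation is correct in substance and is essentially the standard route to this limit-free formulation: Kantorovich--Rubinstein duality, the identity $\sum_v f(v)m_x^{\alpha}(v)=f(x)+(1-\alpha)\Delta f(x)$, the resulting exact expression $\kappa_{\alpha}(x,y)/(1-\alpha)=\inf_{f\in \mathrm{Lip}(1)}\bigl[(1-\nabla_{xy}f)/(1-\alpha)-\nabla_{xy}\Delta f\bigr]$, and the sign flip $f\mapsto -f$ to match the stated normalization $\nabla_{yx}f=1$ all check out. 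You correctly identify the only delicate step, the exchange of the limit $\alpha\to 1^-$ with the infimum: the upper bound is immediate by testing feasible $f$, and the lower bound follows from your compactness/semicontinuity argument (take minimizers $f_{\alpha_n}$, normalize, pass to a convergent subsequence, and observe that any cluster point must satisfy $\nabla_{xy}f^{*}=1$ since otherwise the penalty term forces the values to $+\infty$ while they are bounded above). One simplification you could note: the objective depends on $f$ only through its values on the finite set $N[x]\cup N[y]$, so even for infinite locally finite graphs one may restrict to $1$-Lipschitz functions on this finite set (extending back by McShane), which makes the compact\-ness argument go through directly and renders the separate perturbation argument for the infinite case unnecessary. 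With that observation your outline is a complete and correct proof, independent of, and more elementary than, the operator-theoretic treatment in \cite{MW}.
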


Shortly later, Bai, Huang, Lu, and Yau \cite{BHLY} proved a dual theorem for the limit-free definition of the Lin-Lu-Yau Ricci curvature. Given a vertex $x\in V(G)$, Let $m^0_x$ be the probability distribution of random walk at $x$ with idleness equal to $0$, i.e., $m^0_x(y) = \frac{1}{d(x)}$ if $y\in N(x)$, and $m^0_x(y)=0$ otherwise.
For any two vertices $x$ and $y$, a {\em $\ast$-coupling} between $m^0_x$ and $m^0_y$ is a mapping $B: V(G)\times V(G)\to \mathbb{R}$ with finite support such that 
\begin{enumerate}[(1)]
    \item $0<B(x,y)$, but all other values $B(u,v)\leq 0$.
    \item $\sum\limits_{u,v\in V(G)} B(u,v)=0$.  
    %\item $\sum\limits_{y \in V} B(u, y)= 1$, $\sum\limits_{x \in V} B(x, v)=1$. True but not necessary.  
    \item $\sum\limits_{v \in V(G)} B(u, v)=-m^0_x(u)$ for all $u$ except $x$. %(with item 3, 4, 5, item 2 is not needed)
    \item $\sum\limits_{u \in V(G)} B(u, v)=-m^0_y(v)$ 
for all $v$ except $y$.  
\end{enumerate}

\begin{theorem}\cite{BHLY} (Curvature via the $\ast$-Coupling function)\label{thm:curvatureviacoupling}
For any two vertices $x, y\in V(G)$, 
\begin{equation} \label{eq:curv_coupling}
\kappa_{\textrm{LLY}}(x,y)=\frac{1}{d(x,y)}\sup\limits_{B} \sum\limits_{u, v\in V(G)} B(u, v)d(u, v),
\end{equation}
where the supremum is taken over all $\ast$-couplings $B$ between $m^0_x$ and $m^0_y$.
\end{theorem}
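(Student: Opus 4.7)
The plan is to derive this $\ast$-coupling characterization from the Münch--Wojciechowski Laplacian characterization (Theorem \ref{thm:curvature_laplacian}) by recognizing the two optimization problems as an LP-duality pair. I would prove weak duality via a direct term-by-term estimate, and then obtain the matching lower bound by invoking finite-dimensional LP strong duality.

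For weak duality, I fix any $1$-Lipschitz $f$ with $\nabla_{yx}f = 1$ (i.e.\ $f(y) - f(x) = d(x,y)$) and any $\ast$-coupling $B$, and establish the pointwise bound
$$B(u,v)\, d(u,v) \;\leq\; B(u,v)\,\bigl(f(v) - f(u)\bigr) \qquad \text{for every } (u,v) \in V(G) \times V(G).$$
At $(u,v) = (x,y)$ this is equality, since $B(x,y) > 0$ and $f(y) - f(x) = d(x,y)$; off $(x,y)$ one has $B(u,v) \leq 0$ while $d(u,v) \geq f(v) - f(u)$ by $1$-Lipschitzness, and multiplying by the nonpositive $B(u,v)$ flips the inequality. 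Summing and swapping the order of summation yields
$$\sum_{u,v} B(u,v)\, d(u,v) \;\leq\; \sum_v f(v) \sum_u B(u,v) \;-\; \sum_u f(u) \sum_v B(u,v).$$
From axioms (2)--(4) one derives $\sum_u B(u,y) = \sum_v B(x,v) = 1$ (using $m^0_x(x) = m^0_y(y) = 0$), so the two inner sums collapse to $f(y) - \sum_v m^0_y(v) f(v)$ and $f(x) - \sum_u m^0_x(u) f(u)$, respectively. Subtracting and substituting $f(y) - f(x) = d(x,y)$ gives exactly $\Delta f(x) - \Delta f(y) = d(x,y)\,\nabla_{xy}\Delta f$. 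Taking $\sup_B$ on the left and $\inf_f$ on the right then yields $\sup_B \frac{1}{d(x,y)}\sum B(u,v)\, d(u,v) \leq \kappa_{\textrm{LLY}}(x,y)$.

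For the reverse inequality, I would formulate the Münch--Wojciechowski problem as a finite LP in variables $\{f(v)\}_{v \in V(G)}$: minimize $\frac{1}{d(x,y)}(\Delta f(x) - \Delta f(y))$ subject to $f(u) - f(v) \leq d(u,v)$ for all $u,v$ and $f(y) - f(x) = d(x,y)$. Its LP dual has a nonpositive multiplier for each Lipschitz inequality and an unrestricted multiplier for the normalization equality; dual feasibility enforces precisely the marginal conditions $\sum_u B(u,v) = -m^0_y(v)$ for $v \neq y$, $\sum_v B(u,v) = -m^0_x(u)$ for $u \neq x$, and $\sum_{u,v} B(u,v) = 0$, where $B(x,y)$ is identified with the multiplier for the normalization. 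The primal is feasible (take $f(v) = d(v,x)$) and bounded, so LP strong duality supplies equality of the two optimal values, matching the weak-duality bound. The main obstacle is checking that an optimal dual solution satisfies the strict sign condition $B(x,y) > 0$ rather than merely $B(x,y) \geq 0$; this follows by complementary slackness, because the normalization constraint $f(y) - f(x) = d(x,y)$ is necessarily active at any optimal $f$, forcing its dual multiplier to be nonzero, with the sign pinned by the minimization direction of the objective.
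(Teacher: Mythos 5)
This statement is quoted from \cite{BHLY} and the paper gives no proof of it, so there is nothing internal to compare your argument against; I can only assess it on its own terms. Your weak-duality half is correct and cleanly executed: the pointwise bound $B(u,v)d(u,v)\le B(u,v)(f(v)-f(u))$, the identity $\sum_u B(u,y)=\sum_v B(x,v)=1$ derived from axioms (2)--(4), and the collapse of the double sum to $\Delta f(x)-\Delta f(y)$ are all right, and this gives $\sup_B\frac{1}{d(x,y)}\sum B(u,v)d(u,v)\le\kappa_{\textrm{LLY}}(x,y)$.

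The strong-duality half has a genuine gap. The LP dual of the Lipschitz program (variables $f(w)$, constraints $f(v)-f(u)\le d(u,v)$ plus the normalization) does \emph{not} have the marginal conditions (3) and (4) as its feasibility constraints. Dual feasibility gives one equation per vertex $w$, namely the \emph{divergence} condition $\sum_u B(u,w)-\sum_v B(w,v)= m^0_x(w)-m^0_y(w)$ (suitably normalized), which is only the \emph{difference} of (4) and (3). The dual feasible set is strictly larger than the set of $\ast$-couplings: adding a circulation (subtract $\epsilon$ from $B(a,b),B(b,c),B(c,a)$ for vertices off $\{x,y\}$) preserves all divergences and the sign constraints but destroys the individual row and column sums. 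So your claim that ``dual feasibility enforces precisely the marginal conditions'' is false, and strong LP duality only identifies $\kappa$ with the supremum over this larger divergence-constrained set. To finish, you must show that an optimal solution of the flow-type dual can be converted into a $\ast$-coupling of no smaller value, which is where the triangle inequality for the graph metric $d$ enters (shortcutting flow paths); this reduction is the substantive content you have skipped. Separately, your complementary-slackness argument for $B(x,y)>0$ is backwards: an equality constraint is always active, and activeness never forces its multiplier to be nonzero. Fortunately this issue is vacuous once the marginals are in place: conditions (2)--(4) give $\sum_v B(x,v)=\sum_{u\ne x}m^0_x(u)=1$, and since $B(x,v)\le 0$ for $v\ne y$ this forces $B(x,y)\ge 1>0$ automatically. (For reference, \cite{BHLY} argue by a different route, extracting a $\ast$-coupling as a rescaled limit of optimal couplings of the $\alpha$-lazy walks as $\alpha\to 1$, which sidesteps the flow-to-coupling reduction.)
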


\section{Preliminaries}\label{sec:prelims}

% A mass distribution (over the vertex set $V(G)$) is a mapping $m: V(G)\to [0,\infty)$. The total mass of $m$
% is $\|m\|_1=\sum_{v\in V(G)}m(v)$.
% One can relax the definition of coupling over two mass distributions of equal total mass similarly.
%  We have
% the following lemma to compute the lower bound on $\kappa(x,y)$ for any edge $xy\in E(G)$.

We first show a new way (but follow the spirit of previous formulations) to compute Lin-Lu-Yau curvature lower bounds on a locally finite graph. Given $x\ne y\in V(G)$ with $d(x)\leq d(y)$, let $\lcm(d(x),d(y))$ denote the least common multiple integer of $d(x)$ and $d(y)$. Consider a pair of co-prime integers $c_x$ and $c_y$ satisfying
\[\lcm(d(x),d(y))=d(x)c_x=d(y)c_y.\]
Note that $c_x\ge c_y$. Let $\mu_x$ be a mass distribution defined as
\[
\mu_x(u)=\begin{cases}
c_x-c_y & \mbox{ if } u=y \mbox{ or } u\in N(x)\cap N(y),\\
   c_x &\mbox{ if } u \in N(x)\backslash N[y],\\
0 &\mbox{otherwise.}   
\end{cases}
\]
Let $\mu_y$ be a mass distribution defined as
\[
\mu_y(u)=\begin{cases}
   c_y &\mbox{ if } u \in N(y)\backslash N[x],\\
0 &\mbox{otherwise.}   
\end{cases}
\]
It is straightforward to check that
$\mu_x$ and $\mu_y$ have equal total mass $(d_y-1-|N(x)\cap N(y)|)c_y$
. Given a coupling $\sigma: V(G)\times V(G)\to \mathbb{Z}$ between $\mu_x$ and $\mu_y$ (i.e., $\sigma(u,v)\ge 0$ for any $u,v\in V(G)$, and $\sum_{v\in V(G)} \sigma(z,v)=\mu_x(z)$, $\sum_{u\in V(G)} \sigma(u,z) = \mu_y(z)$ for all vertices $z\in V(G)$), define the {\it transportation cost} of $\sigma$, denoted by $C(\sigma)$, as follows: 
\[C(\sigma)=\sum_{u,v\in V(G)} \sigma(u,v)d(u,v).\] 
Note that by the definition of $\mu_x$ and $\mu_y$, $\mu_x(u)\ge 0$ if $u\in N(x)$ and $\mu_x(u)=0$ otherwise; $\mu_y(u)\ge 0$ if $u\in N(y)\backslash N[x]$ and $\mu_y(u)=0$ otherwise. It follows that for any coupling $\sigma$ between $\mu_x$ and $\mu_y$, $\sigma(u,v)=0$ if $(u,v)\notin N(x)\times (N(y)\backslash N[x])$. Hence, the transportation cost of $\sigma$ is $$C(\sigma)=\sum_{u,v\in V(G)} \sigma(u,v)d(u,v)=\sum_{u\in N(x), v\in N(y)\backslash N[x]}\sigma(u,v)d(u,v).$$

Very recently, Li and Lu~\cite{Li-Lu2024+} showed that for any two vertices $x,y\in V(G)$ in a locally finite graph $G$, 
 \[ \kappa(x,y)= 1+\frac{1}{d(y)} - \frac{\min_{\sigma}C(\sigma)}{\lcm(d(x),d(y))},\]
 where the minimum is taken over all integer-valued couplings $\sigma$ between $\mu_x$ and $\mu_y$.
In this paper, we only need the lower bounds on $\kappa(x,y)$. We reproduce the proof of the lower bounds below for completeness.

\begin{lemma}\label{lem:coupling}
Let $G$ be a locally finite graph, and $x \neq y\in V(G)$ with $d(x)\leq d(y)$.
    For any coupling $\sigma$ between
    $\mu_x$ and $\mu_y$, we have
    \[ \kappa(x,y)\geq 1+\frac{1}{d(y)} - \frac{C(\sigma)}{\lcm(d(x),d(y))}.\]
\end{lemma}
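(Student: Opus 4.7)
The plan is to apply the $\ast$-coupling characterization from Theorem~\ref{thm:curvatureviacoupling} by constructing an explicit $\ast$-coupling $B$ between $m^0_x$ and $m^0_y$ directly out of $\sigma$. First I observe that we may assume $xy\in E(G)$: otherwise the total masses of $\mu_x$ and $\mu_y$ disagree (a quick count gives total mass $L+c_x-(T+1)c_y$ versus $L-Tc_y$, where $T=|N(x)\cap N(y)|$, a difference of $c_x-c_y$), so no coupling $\sigma$ exists and the lemma is vacuously true. In particular, $d(x,y)=1$.

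Set $L=\lcm(d(x),d(y))$, write $\tilde B=L\cdot B$, and let $R=\{y\}\cup (N(x)\cap N(y))$ be the set of vertices where $\mu_x(u)=c_x-c_y$ falls short of $c_x$ by $c_y$. Define
\begin{equation*}
\tilde B(u,v)=\begin{cases}
L+c_y &\text{if } (u,v)=(x,y),\\
-c_y &\text{if } u=v\in R\cup\{x\},\\
-\sigma(u,v) &\text{if } u\neq x \text{ and } u\neq v,\\
0 &\text{otherwise.}
\end{cases}
\end{equation*}
Because $\operatorname{supp}(\mu_x)\cap \operatorname{supp}(\mu_y)=\emptyset$, the coupling $\sigma$ vanishes on the diagonal, so the three nonzero cases are mutually consistent.

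I would then verify the four $\ast$-coupling axioms. The sign conditions are immediate. For the row-sum condition with $u\neq x$, the sum $\sum_v\tilde B(u,v)$ equals $-\mu_x(u)-c_y$ when $u\in R$ and $-\mu_x(u)$ otherwise; in either case this evaluates to $-c_x$ when $u\in N(x)$ and $0$ otherwise, matching $-L\cdot m^0_x(u)$. The column-sum condition with $v\neq y$ is analogous: the correction $\tilde B(x,x)=-c_y$ supplies the needed $-c_y$ at column $x$, and the correction at $(v,v)$ plays the same role for $v\in N(x)\cap N(y)$. The total-sum condition is then automatic, and (crucially) it is what pins down the value $\tilde B(x,y)=L+c_y$.

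Finally, the cost computation is clean. Every diagonal correction contributes zero since $d(u,u)=0$; the entry $(x,y)$ contributes $(L+c_y)\cdot d(x,y)=L+c_y$; and the remaining entries contribute $-\sum_{u,v}\sigma(u,v)d(u,v)=-C(\sigma)$. Therefore $B$ has cost $(L+c_y-C(\sigma))/L$, and Theorem~\ref{thm:curvatureviacoupling} (together with $c_y/L=1/d(y)$ and $d(x,y)=1$) yields $\kappa(x,y)\geq 1+\frac{1}{d(y)}-\frac{C(\sigma)}{L}$, as desired. The main subtlety is placing the ``fixing'' corrections on the diagonal rather than off: this is essential because $d(u,u)=0$ ensures those negative contributions do not weaken the lower bound, whereas an off-diagonal placement (e.g.\ matching $y$ to $x$ directly) would cost an extra $c_y$ in the transportation cost and produce a strictly weaker estimate.
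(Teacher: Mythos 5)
Your construction is, after dividing by $L=\lcm(d(x),d(y))$, exactly the $\ast$-coupling $B_\sigma$ from the paper's proof: your $R\cup\{x\}$ is $N[x]\cap N[y]$, your diagonal value $-c_y/L$ is $-1/d(y)$, and $(L+c_y)/L=1+1/d(y)$, so the axiom checks and the cost computation coincide with the paper's, and the argument is correct for the only case that is ever used, $xy\in E(G)$. One small inaccuracy: your claim that the lemma is vacuous for non-adjacent pairs fails when $d(x)=d(y)$, since then $c_x=c_y$ and the two total masses ($d(x)-|N(x)\cap N(y)|$ each) agree, so couplings do exist; this reflects an imprecision in the lemma's statement rather than in your construction, as the paper's own verification also tacitly assumes $xy\in E(G)$ (it uses $|N[x]\cap N[y]|=|N(x)\cap N(y)|+2$ and $d(x,y)=1$).
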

\begin{proof}
We define a $\ast$-coupling $B_\sigma$ between $m_x^0$ and $m_y^0$ as follows.
For any $u,v\in V(G)$, we have
\begin{equation} \label{equ:Bcoupling}
B_\sigma(u,v)=
\begin{cases}
1+\frac{1}{d(y)} & \mbox{ if } (u,v)=(x,y);\\
-\frac{1}{d(y)} & \mbox{ if } u=v \in N[x] \cap N[y];\\
- \frac{\sigma(u,v)}{lcm(d(x),d(y))} & \mbox{ if } (u,v)\in N[x] \times (N(y) \setminus N[x]);\\
0 & \mbox{ otherwise.}
\end{cases}
\end{equation}

Now we verify that $B_\sigma$ is a $\ast$-coupling between $m^0_x$ and $m^0_y$ by checking the four conditions in its definition. Condition (1) is satisfied by the definition.
For Condition (2), we have
\begin{align*}
   \sum\limits_{u,v\in V(G)} B(u,v) &= 1+\frac{1}{d(y)} -\frac{|N[x]\cap N[y]|}{d(y)}
   -\frac{\sum_{u,v}\sigma(u,v)}{lcm(d(x),d(y))}\\
   &= 1+\frac{1}{d(y)} -\frac{|N(x)\cap N(y)|+2}{d(y)}
   -\frac{\|\mu_y\|_1}{lcm(d(x),d(y))}\\
   &=1+\frac{1}{d(y)} -\frac{|N(x)\cap N(y)|+2}{d(y)}
   -\frac{(d_y-1-|N(x)\cap N(y)|)c_y}{lcm(d(x),d(y))}\\
   &=0.
\end{align*}
Both sides of the equation in Condition (3) are zeros unless 
$u\in N[x]$. Since $u\not=x$,  we may assume $u\in N(x)$.
We have
\begin{align*}
    \sum\limits_{v \in V(G)} B(u, v)
    &= -\frac{\mathbbm{1}_{u\in N(x)\cap N[y]}}{d(y)} -
    \frac{\sum\limits_{v \in N(y)\setminus N[x] } \sigma(u,v)}{\lcm(d(x),d(y))}\\
    &= -\frac{c_y \mathbbm{1}_{u\in N(x)\cap N[y]}}{\lcm(d(x),d(y))} - \frac{\mu_x(u)}{\lcm(d(x),d(y))}\\
    &= -\frac{c_x}{\lcm(d(x),d(y))}\\
    &=-\frac{1}{d(x)}\\
    &=-m_x^0(u).
\end{align*}
Now we verify Condition (4). Similarly, we may assume $v\in N(y)$. We divide it into two cases. Case 1: $v\in N(y)\cap N[x]$. We have
\begin{align*}
    \sum\limits_{u \in V(G)} B(u, v)
    &= - \frac{1}{d(y)}\\
    &=-m^0_y(v).
\end{align*}
Case 2: $v\in N(y)\setminus N[x]$. We have
\begin{align*}
    \sum\limits_{u \in V(G)} B(u, v)
    &= -    \frac{\sum\limits_{u \in N[x] } \sigma(u,v)}{\lcm(d(x),d(y))}\\
    &=  - \frac{\mu_y(v)}{\lcm(d(x),d(y))}\\
    &= -\frac{c_y}{\lcm(d(x),d(y))}\\
    &=-\frac{1}{d(y)}\\
    &=-m_y^0(u).
\end{align*}
All four conditions in the definition of $\ast$-coupling are verified.
Therefore,
\begin{align*}
   \kappa(x,y) &\geq \frac{1}{d(x,y)}\sum_{u,v} B_\sigma(u,v)d(u,v) \\
   &= B(x,y) - \sum_{u\in N[x], v\in N(y)\setminus N[x]}\frac{\sigma(u,v)}{\lcm(d(x),d(y))} d(u,v)\\
   &=1+ \frac{1}{d(y)} - \frac{C(\sigma)}{\lcm(d(x),d(y))}. 
\end{align*}\qedhere
\end{proof}

\begin{lemma}\label{lem:pos_degree_pair}
Let $G$ be a graph and $xy$ be an edge in $G$ such that $3=d(x)\le d(y)$. Suppose $N(x)\cap N(y)\ne \emptyset$ (i.e., $x$ and $y$ have one or two common neighbors) and if $d(y)\geq 4$, $y$ has a neighbor $u_2\notin N[x]$ such that $u_2$ is adjacent to some vertex $u_1\in N(x)\cap N(y)$. Then the followings hold. 
\begin{itemize}
\item [(i)] If $|N(x)\cap N(y)|=1$ and $d(y)\le 4$, then $\kappa(x,y)>0$.
\item [(ii)] If  $|N(x)\cap N(y)|=2$ and $d(y)\le 10$, then $\kappa(x,y)>0$.
\end{itemize}
\end{lemma}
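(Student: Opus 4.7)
The plan is to apply Lemma~\ref{lem:coupling} to an explicit integer-valued coupling $\sigma$ between $\mu_x$ and $\mu_y$ that exploits the hypothesized edge $u_1 u_2$. Label the vertices as follows: in case (i) write $N(x)\cap N(y) = \{u_1\}$ and let $v_1$ be the unique neighbor of $x$ outside $N[y]$; in case (ii) write $N(x)\cap N(y) = \{u_1, u_1'\}$. Enumerate $N(y)\setminus N[x] = \{w_1, w_2, \ldots\}$ so that, whenever $d(y)\ge 4$, $w_1$ coincides with the promised $u_2$ and is therefore adjacent to $u_1$. The cost estimates will rely on four easy bounds: $d(y, w_i) = 1$ for all $i$; $d(u_1, w_1) = 1$ when $d(y)\ge 4$; $d(u, v)\le 2$ for any two distinct neighbors $u, v$ of $y$; and, in case (i) only, $d(v_1, w_i)\le 3$ via the path $v_1\, x\, y\, w_i$.

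For case (ii) the coupling ships all of $y$'s mass along cost-$1$ edges, pushes $\min(c_x-c_y,\, c_y)$ units from $u_1$ to $w_1$ along the cheap edge, and routes the remaining $u_1$-mass and all of $u_1'$-mass at the generic cost-$2$ bound. Splitting into the two regimes $c_x\le 2c_y$ (equivalently $d(y)\le 6$) and $c_x > 2c_y$ yields
\begin{equation*}
C(\sigma) \;\le\; \begin{cases} 4(c_x - c_y), & d(y)\le 6,\\ 5c_x - 6c_y, & d(y)\ge 7. \end{cases}
\end{equation*}
Feeding this into Lemma~\ref{lem:coupling} and computing for each $k := d(y)\in\{3,4,\ldots,10\}$---since $(c_x, c_y)$ depends on $\gcd(3,k)$, three residue classes are checked---shows $\kappa(x,y) > 0$ throughout, with the tightest case $k=10$ giving $\kappa(x,y)\ge 1/30$. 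Case (i) reduces to $k\in\{3,4\}$: an analogous coupling routes $y$- and $u_1$-mass at cost $1$ (using the edge $u_1 w_1$ when $k=4$) and $v_1$-mass at cost $3$, yielding $C(\sigma)\le 3$ with $\kappa(x,y)\ge 1/3$ for $k=3$, and $C(\sigma)\le 14$ with $\kappa(x,y)\ge 1/12$ for $k=4$.

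The hard part will be designing the coupling so that the cheap edge $u_1 w_1$ is saturated without over-supplying $w_1$ while keeping $\sigma$ integer-valued; this is exactly where the threshold $c_x \le 2c_y$ versus $c_x > 2c_y$ arises. Everything downstream is a short arithmetic check. It is also worth noting that the analogous estimate turns negative at $d(y) = 11$ and at $d(y)=5$ in case (i), confirming that the thresholds $d(y)\le 4$ in (i) and $d(y)\le 10$ in (ii) are sharp for this strategy.
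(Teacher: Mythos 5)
Your proposal is correct and follows essentially the same route as the paper: the same mass distributions $\mu_x,\mu_y$ fed into Lemma~\ref{lem:coupling}, the same use of the cheap edge $u_1u_2$ to save $\min\{c_x-c_y,c_y\}$ units of cost, the same split at $d(y)\le 6$ versus $d(y)\ge 7$, and the identical final bounds ($C(\sigma)\le 3$ and $14$ in case (i), and $\kappa(x,y)\ge 7/k-2/3$ at the tight end of case (ii)). The only cosmetic difference is your per-residue-class check of $(c_x,c_y)$, which is unnecessary since the bounds depend only on the ratio $c_x/c_y=k/3$.
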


\begin{proof}
Suppose $d(y)=k$ for some integer $k\ge 3$. Let $N(y)=\{u_1, u_2, \ldots u_k\}$, where $u_k=x$. We first show (i). Assume that $x$ and $y$ have exactly one common neighbor, say $u_1$. If $d(y)=3$, then $d(x)=d(y)=3$. We have $lcm(d(x),d(y))=3$, and $c_x=c_y=1$. Observe that
$\mu_x(u)=1$ if $u$ is the unique vertex in $N(x)\setminus N[y]$ and zero otherwise; 
$\mu_y(u)=1$ if $u=u_2$ and zero otherwise. It follows that $C(\sigma)\leq 3$ for any coupling $\sigma$ between $\mu_x$ and $\mu_y$. Applying Lemma \ref{lem:coupling}, we then have that 
\[\kappa(x,y)\geq 
1+\frac{1}{d(y)}-\frac{C(\sigma)}{lcm(d(x),d(y))}\geq
1+\frac{1}{3}-\frac{3}{3}=\frac{1}{3}>0.
\]

 We may now assume that $d(y)=4$, and we may assume that $u_2\in N(y)\backslash N[x]$ is adjacent to $u_1$ since $d(y)\ge 4$. Let $x'$ be the neighbor of $x$ other than $u_1, y$. We have
 $lcm(d(x),d(y))=12$, $c_x=4$, and $c_y=3$.
 The mass distribution $\mu_x$
 has $4$ units on $x'$, $1$ unit on both $y$ and $u_1$ while
 $\mu_y$ has $3$ units on both $u_2$ and $u_3$. See Figure \ref{fig:lemma2} (left).

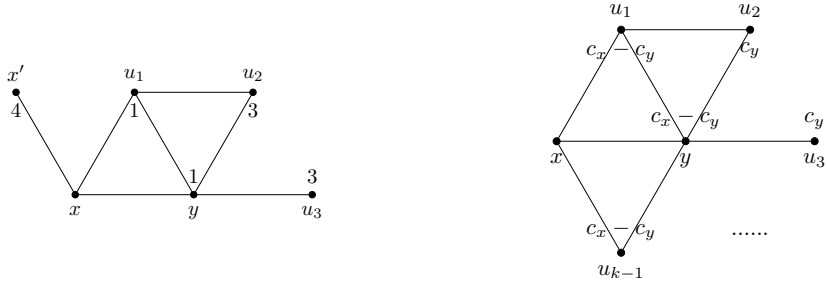
\begin{figure}[htb]
    \begin{center}
        \begin{minipage}{.3\textwidth}
        		\resizebox{4.5cm}{!}{\begin{tikzpicture}[scale=1, Wvertex/.style={circle, draw=black, fill=white, scale=3}, bvertex/.style={circle, draw=black, fill=black, scale=0.3},rvertex/.style={circle, draw=red, fill=red, scale=0.2}]

\node [bvertex, label={[font=\small] below:$x$}] (x) at (-1,0) {};
\node [bvertex, label={[font=\small] below:$y$}, label={[font=\small] above:$1$}] (y) at (1,0) {};
\node [bvertex, label={[font=\small] above:$u_1$}, label={[font=\small] below:$1$}] (u1) at (0,1.73) {};
\node [bvertex, label={[font=\small] above:$u_2$}, label={[font=\small] below:$3$}] (u2) at (2,1.73) {};
\node [bvertex, label={[font=\small] below:$u_3$}, label={[font=\small] above:$3$}] (u3) at (3,0) {};
\node [bvertex, label={[font=\small] above:$x'$}, label={[font=\small] below:$4$}] (x') at (-2,1.73) {};

\draw (x) -- (y);
\draw (x) -- (u1);
\draw (y) -- (u1);
\draw (u2) -- (u1);
\draw (y) -- (u2);
\draw (y) -- (u3);
\draw (x) -- (x');

\end{tikzpicture}	}
        \end{minipage}
            \hspace{2cm}
        \begin{minipage}{.3\textwidth}
        		\resizebox{4cm}{!}{\begin{tikzpicture}[scale=1, Wvertex/.style={circle, draw=black, fill=white, scale=3}, bvertex/.style={circle, draw=black, fill=black, scale=0.3},rvertex/.style={circle, draw=red, fill=red, scale=0.2}]

\node [bvertex, label={[font=\small] below:$x$}] (x) at (-1,0) {};
\node [bvertex, label={[font=\small] below:$y$}, label={[font=\small] above:$c_x-c_y$}] (y) at (1,0) {};
\node [bvertex, label={[font=\small] above:$u_1$}, label={[font=\small] below:$c_x-c_y$}] (u1) at (0,1.73) {};
\node [bvertex, label={[font=\small] above:$u_2$}, label={[font=\small] below:$c_y$}] (u2) at (2,1.73) {};
\node [bvertex, label={[font=\small] below:$u_3$}, label={[font=\small] above:$c_y$}] (u3) at (3,0) {};
\node [label={[font=\small] above:$......$}] (uk'') at (2,-1.73) {};

\node [bvertex, label={[font=\small] below:$u_{k-1}$}, label={[font=\small] above:$c_x-c_y$}] (uk') at (0,-1.73) {};

\draw (x) -- (y);
\draw (x) -- (u1);
\draw (y) -- (u1);
\draw (u2) -- (u1);
\draw (y) -- (u2);
\draw (y) -- (u3);
\draw (x) -- (uk');
\draw (y) -- (uk');

\end{tikzpicture}	}
        \end{minipage}
    \end{center}
    \caption{Mass distributions for $\mu_x$ and $\mu_y$.}
    \label{fig:lemma2}
    \end{figure}

  It is straightforward to see that there is a coupling $\sigma$ with the cost $C(\sigma)\leq 1+1+4\times 3=14$.
  Applying Lemma \ref{lem:coupling},
\[\kappa(x,y)\geq 
1+\frac{1}{d(y)}-\frac{C(\sigma)}{lcm(d(x),d(y))}\geq
1+\frac{1}{4}-\frac{14}{12}=\frac{1}{12}>0.
\]
 
Now we show (ii).
We may assume that $N(x)\cap N(y)=\{u_1, u_{k-1}\}$, and $u_2\in N(y)\backslash N[x]$ is adjacent to $u_1$ if $d(y)\ge 4$. See Figure \ref{fig:lemma2} (right) for the mass distribution of $\mu_x$ and $\mu_y$.
Again it is straightforward to see that there is a coupling $\sigma$ of cost
\[C(\sigma)
\leq (1+2+2)(c_x-c_y) - \min\{c_x-c_y, c_y\}.
\]
Applying Lemma \ref{lem:coupling}, we have
\begin{align*}
\kappa(x,y) &\geq 
1+\frac{1}{k}-\frac{  5(c_x-c_y) - \min\{c_x-c_y, c_y\}}{lcm(d(x),d(y))}\\
&=
\begin{cases}
    \frac{5}{k}-\frac{1}{3} & \mbox{ if } k\leq 6;\\
    \frac{7}{k} -\frac{2}{3} & \mbox{ if } 6 \leq k\leq 10; 
\end{cases}\\
&>0.
\end{align*}
Here we apply $k\leq 10$.
\end{proof}

\begin{lemma}\label{lem:exterior_at_least_3}
Let $G$ be an outerplanar graph, and let $uv\in E(G)$ be an exterior edge with positive LLY curvature. Suppose $3\le d(u)\le d(v)$. Then $u,v$ have exactly one common neighbor $w$, and either $d(u)=d(v)=3$ or $d(u)=3, d(v)=4$ and $v$ has a neighbor $y\in N(v)\backslash \{u,w\}$ such that $yw\in E(G)$.
\end{lemma}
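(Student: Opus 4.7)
The plan is to prove the lemma by case analysis on $|N(u)\cap N(v)|$ and on the pair $(d(u), d(v))$, combining the outerplanar embedding with Lemma~\ref{lem:coupling} and Theorem~\ref{thm:curvature_laplacian}. A preliminary reduction lets me assume $G$ is $2$-connected: any common neighbor of $u,v$ lies in the block containing $uv$, and extra neighbors of $u,v$ in other blocks only increase the transport cost in $\kappa(u,v)$; in the trivial block case (so $uv$ is a bridge) a direct $1$-Lipschitz calculation shows $\kappa(u,v)\le -\tfrac{2}{3}<0$. I therefore assume $G$ has a Hamiltonian outer cycle $C=v_1 v_2\cdots v_n$ with $u=v_1$, $v=v_2$, and every other edge of $G$ is a chord of $C$.

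I first show $|N(u)\cap N(v)|\le 1$ by a crossing argument: two common neighbors $w_1\ne w_2$ would force the four chords $v_1 w_i, v_2 w_i$ ($i=1,2$) to lie on the interior side of the exterior edge $uv$, at least one pair of which must cross. Next I show $|N(u)\cap N(v)|\ge 1$ by exhibiting, in its absence, a $1$-Lipschitz test function of the form $f(z)=c-d(z,b_0)$ for some $b_0\in N(v)\setminus\{u\}$ (which satisfies $f(v)-f(u)=1$ precisely because $d(u,b_0)=2$ in the no-common-neighbor case), and then using Theorem~\ref{thm:curvature_laplacian} to check $\Delta f(u)-\Delta f(v)\le 0$. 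Equivalently, one applies Lemma~\ref{lem:coupling} with a matching lower bound $\min_\sigma C(\sigma)\ge \lcm(d(u),d(v))+c_y$, which follows from outerplanarity because it controls the cross-distances between $N(u)\setminus\{v\}$ and $N(v)\setminus\{u\}$.

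Now suppose $N(u)\cap N(v)=\{w\}$. If $w=v_3$, the chord $v_1 v_3$ isolates $v_2$ from $\{v_4,\dots,v_n\}$, forcing $d(v)=2$ and contradicting $d(v)\ge 3$; by symmetry $w\ne v_n$. Hence $w=v_i$ for some $3<i<n$ and both $v_1 v_i, v_2 v_i$ are chords meeting at $v_i$. Combined with the ``at most one common neighbor'' rule, outerplanarity severely restricts where any further chord of $v_1$ or $v_2$ can land, namely to the interior of one of the two regions carved out by $v_1 v_i$, avoiding the opposite neighborhood.

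It remains to rule out three sub-configurations: (a)~$d(u)\ge 4$, (b)~$d(u)=3$ and $d(v)\ge 5$, and (c)~$d(u)=3$, $d(v)=4$ without the stated adjacency condition. For each I apply Lemma~\ref{lem:coupling}: the ``extra'' chord(s) of $v_1$ or $v_2$ land at cycle-vertices whose graph-distance to most of the opposing mass is at least $3$ (the outerplanar embedding together with the common-neighbor count prevents shortcuts), so the minimum transport cost satisfies
\[
\min_\sigma C(\sigma)\ \ge\ \lcm(d(u),d(v))\cdot\Bigl(1+\tfrac{1}{d(v)}\Bigr),
\]
yielding $\kappa(u,v)\le 0$. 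The delicate case is (c): here $\mu_u$ concentrates $c_x=4$ units at the unique vertex in $N(u)\setminus N[v]$, which must be transported to the two vertices of $N(v)\setminus N[u]$; each unit incurs cost at least $3 c_y$ unless $w=v_i$ is adjacent in $G$ to one of those two vertices, which is exactly the condition that $v$ has a neighbor $y\in N(v)\setminus\{u,w\}$ with $yw\in E(G)$. The main obstacle throughout is this distance bookkeeping, essentially a careful case analysis showing that outerplanarity plus the constraint $|N(u)\cap N(v)|=1$ really forces the relevant pairs $(a,b)$ to be at graph-distance at least $3$.
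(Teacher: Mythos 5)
There is a genuine directional error at the heart of your case analysis. Lemma~\ref{lem:coupling} is a \emph{lower} bound: it says that every coupling $\sigma$ certifies $\kappa(x,y)\ge 1+\frac{1}{d(y)}-\frac{C(\sigma)}{\lcm(d(x),d(y))}$. Consequently, a lower bound on $\min_\sigma C(\sigma)$ — which is what you derive in sub-cases (a), (b), (c) and in the ``equivalently'' remark for the no-common-neighbor step — yields no upper bound on $\kappa(u,v)$ whatsoever, and in particular cannot establish $\kappa(u,v)\le 0$. To convert transport-cost lower bounds into curvature upper bounds you would need the exact Li--Lu formula $\kappa(x,y)=1+\frac{1}{d(y)}-\frac{\min_\sigma C(\sigma)}{\lcm(d(x),d(y))}$, which the paper cites but explicitly does not prove or use beyond the lower-bound direction. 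The correct tool for upper bounds is Theorem~\ref{thm:curvature_laplacian}: any single admissible $1$-Lipschitz $f$ gives $\kappa(u,v)\le \Delta f(u)-\Delta f(v)$. This is exactly what the paper's proof does — it constructs one explicit test function for the ``no common neighbor'' case (yielding $\kappa\le \frac{3}{d(u)}+\frac{3}{d(v)}-2\le 0$) and one for the ``exactly one common neighbor $w$'' case (yielding $\kappa\le \frac{3}{d(u)}+\frac{4+\mathbbm{1}_{|N(w)\cap N(v)|=2}}{d(v)}-2$), from which all of $d(u)=3$, $d(v)\le 4$, and the adjacency condition for $d(v)=4$ fall out in three lines of arithmetic.

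A secondary gap: even granting the Li--Lu equality, your quantitative claims such as $\min_\sigma C(\sigma)\ge \lcm(d(u),d(v))\bigl(1+\frac{1}{d(v)}\bigr)$ and the distance-$\ge 3$ assertions are stated but not proved; you yourself flag the ``distance bookkeeping'' as the main obstacle and then defer it. That bookkeeping is essentially the entire content of the lemma — for instance, with $d(u)=d(v)=4$ one must show that at least one unit of mass travels distance $3$ rather than $2$, and this is precisely where outerplanarity and the single-common-neighbor constraint must be invoked concretely. Your opening reductions (at most one common neighbor by a crossing argument; the bridge case; restriction to the block of $uv$) are fine, and the structural picture of where chords can land is correct, but as written the proof does not close.
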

\begin{proof}
 Let $uv$ be an exterior edge of $G$. Since $G$ is outerplanar, $u$ and $v$ have at most one common neighbor. We first show that $u$ and $v$ have exactly one common neighbor. Let $C$ be the circuit induced by the outer face of $G$.
 Let $u'$ and $v'$ be the last neighbors of $u$ and $v$ in $uCv$ and $vCu$ respectively (where $uCv$ and $vCu$ are the longer walks between $u$ and $v$ in $C$).
 
 Suppose otherwise that $u$ and $v$ have no common neighbor. Then $u'\ne v'$. Consider the edge $uv$ and the $1$-Lipschitz function $f:N[u]\cup N[v]\to \mathbb{R}$ with 
 $$
     f(x):=\begin{cases}
               -1 & \textrm{ if $x\in N(u)\backslash \{v, u'\}$},\\
                0 & \textrm{ if $x\in \{u,u'\}$},\\
                1 &\textrm{ if $x\in \{v, v'\}$},\\
                2 & \textrm{ if $x\in N(v)\backslash \{u,v'\}$}.
           \end{cases}$$
We then have that
\begin{align*}
    \kappa(u,v) & \leq \Delta f(u)-\Delta f(v) = \frac{1}{d(u)}\lp -(d(u)-2)+1\rp-\frac{1}{d(v)}\lp -1+(d(v)-2)\rp\\
    &\leq \frac{3}{d(u)}+\frac{3}{d(v)}-2 \leq \frac{3}{3}+\frac{3}{3}-2=0,
\end{align*}
since $3\leq d(u)\leq d(v)$, contradicting that the vertex pair $\{u, v\}$ has positive LLY curvature. 

Hence we may assume that $u$ and $v$ have exactly one common neighbor, call it $w$. i.e., $u'=v'=w$. Now consider the edge $uv$ again and the $1$-Lipschitz function $f:N[u]\cup N[v]\to \mathbb{R}$ with 
    $$
     f(x):=\begin{cases}
               -1 & \textrm{ if $x\in N(u)\backslash \{v, w\}$},\\
                0 & \textrm{ if $x\in \{u,w\}$},\\
                1 &\textrm{ if $x=v$ or $x\in (N(v)\cap N(w))\backslash \{u\}$},\\
                2 & \textrm{ if $x\in N(v)\backslash N[w]$}.
           \end{cases}$$
It follows that
\begin{align*}
    \kappa(u,v) & \leq \Delta f(u)-\Delta f(v)\\
    &= \frac{1}{d(u)}\lp -(d(u)-2)+1\rp-\frac{1}{d(v)}\lp -2+(d(v)-2)-\mathbbm{1}_{|N(w)\cap N(v)|=2}\rp\\
    &\leq \frac{3}{d(u)}+\frac{4+\mathbbm{1}_{|N(w)\cap N(v)|=2}}{d(v)}-2.
\end{align*}
Hence if $d(u)=\min\{d(u),d(v)\}\geq 4$, then $\kappa(u,v) \leq \frac{3}{4}+\frac{5}{4}-2\leq 0$, giving a contradiction. Thus we have $d(u)=3$. If $d(v)\ge 5$, then  $\kappa(u,v) \leq \frac{3}{3}+\frac{5}{5}-2\leq 0$, a contradiction. Therefore, we have $d(u)=3$ and $d(v)=3$ or $4$. If $d(u)=3$ and $d(v)=4$, $\kappa(u,v)\leq 0$ unless $|N(w)\cap N(v)|$=2, and it follows that there exists a vertex $y\in N(v)\backslash \{u,w\}$ such that $yw\in E(G)$.
\end{proof}

\begin{lemma}\label{lem:deg2_vtx}
Let $G$ be an outerplanar graph, and $u$ be a degree $2$ vertex in $G$ with $N(u)=\{v,w\}$ and $vw\notin E(G)$. Suppose $uv$ has positive LLY curvature. Then $d(v)\leq 4$. Moreover,
\begin{itemize}
\item[(i)] if $d(v)=4$ then there exists some vertex $y\in (N(v)\cap N(w))\backslash \{u\}$ and another vertex $z\in N(v)\cap N(y)$ (this implies that $u,v,w$ are contained in a $4$-cycle, $uvywu$);
\item [(ii)] if $d(v)=3$ then there exists some vertex $y\in (N(v)\cap N(w))\backslash \{u\}$  (this also implies that $u,v,w$ are contained in a $4$-cycle, $uvywu$);
\item [(iii)] and if $d(v)=2$ then $u,v,w$ are contained in a cycle of length $4$ or $5$.
\end{itemize}
\end{lemma}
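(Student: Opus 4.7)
The plan is to combine the Laplacian characterisation of LLY curvature (Theorem~\ref{thm:curvature_laplacian}) with the fact that outerplanar graphs are $K_{2,3}$-minor-free: for any $1$-Lipschitz $f$ with $f(v)-f(u)=1$ one has $\kappa(u,v)\le \Delta f(u)-\Delta f(v)$, so a clever choice of $f$ yields necessary structural conditions, which I then rule out by exhibiting a $K_{2,3}$-minor. From $d(u)=2$ and $vw\notin E(G)$ we get $N(u)\cap N(v)=\emptyset$; from $K_{2,3}$-minor-freeness applied to the non-adjacent pair $\{v,w\}$, $|N(v)\cap N(w)|\le 2$, and since $u\in N(v)\cap N(w)$ at most one further vertex $y$ lies in $N(v)\cap N(w)\setminus\{u\}$.

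For the bound $d(v)\le 4$ and part (i), I split on whether such a $y$ exists. If not, the assignment $f(u)=0$, $f(v)=1$, $f(w)=0$, $f(a)=2$ for all $a\in N(v)\setminus\{u\}$ is $1$-Lipschitz (each such $a\notin N(w)$ gives $d(a,w)\ge 2$) and yields $\Delta f(u)-\Delta f(v)=-1/2+2/d(v)$, forcing $d(v)\le 3$. If $y$ exists, take $f(u)=0$, $f(v)=1$, $f(w)=-1$; Lipschitz pins $f(y)=0$, and for $a\in N(v)\setminus\{u,y\}$ set $f(a)=1$ if $a\in N(y)$ and $f(a)=2$ otherwise. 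Validating the latter requires $d(a,w)\ge 3$ whenever $a\notin N(y)$: otherwise a common neighbour $b$ of $a$ and $w$ exists with $b\notin\{u,v,y\}$ (as $u,y\notin N(a)$ and $v\notin N(w)$), and the branch sets $\{v\},\{w\},\{u\},\{y\},\{a,b\}$ exhibit a $K_{2,3}$-minor, contradicting outerplanarity. A parallel argument, contracting $uw$ so that the branch sets $\{v\},\{y\},\{z_1\},\{z_2\},\{u,w\}$ give a third common neighbour of $v$ and $y$, shows $r:=|N(v)\cap N(y)|\le 1$. The Laplacian bound then reads $\Delta f(u)-\Delta f(v)=((4+r)-d(v))/d(v)$, so positivity forces $d(v)\le 3+r\le 4$; moreover $d(v)=4$ forces $r=1$, supplying the vertex $z\in N(v)\cap N(y)$ required by (i).

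For part (ii) I must additionally exclude $d(v)=3$ with $N(v)\cap N(w)=\{u\}$. Refining the first test function to $f(u)=0$, $f(v)=1$, $f(w)=-1$ and optimising $f(a_i)$ by distance ($\le 1$ if $d(a_i,w)=2$ and $\le 2$ otherwise) gives $\Delta f(u)-\Delta f(v)=(1-s')/3$ where $s'$ counts the $a_i$ with $d(a_i,w)\ge 3$; positivity forces $s'=0$, so the two neighbours $a_1,a_2$ of $v$ other than $u$ lie at distance exactly $2$ from $w$ through common neighbours $b_1,b_2\notin\{u,v,w,a_1,a_2\}$. If $b_1\ne b_2$, the branch sets $\{v\},\{w\},\{u\},\{a_1,b_1\},\{a_2,b_2\}$ yield a $K_{2,3}$-minor; if $b_1=b_2=:b$, the branch sets $\{v\},\{b\},\{u,w\},\{a_1\},\{a_2\}$ do. Either way outerplanarity is violated, establishing (ii).

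For part (iii), with $N(v)=\{u,v'\}$ and $v'\ne w$, the assignment $f(u)=0$, $f(v)=1$, $f(w)=-1$, $f(v')=2$ is $1$-Lipschitz precisely when $d(v',w)\ge 3$, and in that case gives $\Delta f(u)-\Delta f(v)=0$, contradicting positive curvature; hence $d(v',w)\le 2$, which yields the $4$-cycle $u,v,v',w,u$ when $v'w\in E(G)$ and the $5$-cycle $u,v,v',x,w,u$ when $v'$ and $w$ share a common neighbour $x\notin\{u,v\}$. I expect the main technical obstacle to be the $K_{2,3}$-minor bookkeeping, in particular verifying in each case that the contracted vertex is genuinely new (distinct from $u$, $y$, $v$, $w$, and the existing common neighbours) so that it truly contributes a third branch set; once this is done, the Laplacian bounds close every case.
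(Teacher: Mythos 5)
Your proof is correct and uses essentially the same method as the paper: upper-bounding $\kappa(u,v)$ by $\Delta f(u)-\Delta f(v)$ via Theorem~\ref{thm:curvature_laplacian} for explicit $1$-Lipschitz test functions with $f(v)-f(u)=1$, and invoking outerplanarity to control common neighbourhoods. The differences are only in bookkeeping --- you use several case-tailored test functions and make the outerplanarity steps explicit as $K_{2,3}$-minor arguments, where the paper uses one universal test function and leaves those structural steps largely implicit.
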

\begin{proof}
Let $u$ be a degree 2 vertex in $G$ such that $N(u)=\{v,w\}$, $vw\notin E(G)$ and the edge $uv$ has positive LLY curvature. Consider the edge $uv$ and a $1$-Lipschitz function $f:N[u]\cup N[v]\to \mathbb{R}$ as follows.
$$
     f(x):=\begin{cases}
               -1 & \textrm{ if $x=w$},\\
                0 & \textrm{ if $x=u$ or $x\in (N(v)\cap N(w))\backslash \{u\}$},\\
                1 &\textrm{ if $x=v$ or $x$ is adjacent to some vertex in $(N(v)\cap N(w))\backslash \{u\}$ },\\
                2 & \textrm{ otherwise.}
           \end{cases}
$$
Note that $\Delta f(u)=0$.

If $d(v)\in \{2,3,4\}$, then the consequence in (iii), (ii) or (i) respectively must hold; otherwise, either $(N(v)\cap N(w))\backslash \{u\}=\emptyset$ and by the outerplanarity of $G$ there must exist one neighbor $v'$ of $v$ which is distance $3$ from $w$ (and thus $f(v')=2$), or $(N(v)\cap N(w))\backslash \{u\}\ne\emptyset$ and $d(v)=4$ and the two neighbors $v',v''\notin N(w)$ of $v$ are distance $3$ from $w$ ($f(v')=f(v'')=2$). It follows that
$$\kappa(x,y) \leq -\Delta f(v)\leq -\min \{\frac{1}{d(v)}\lp -1+1\rp, \frac{1}{4} \lp -2+2\rp\}\leq 0,$$
giving a contradiction.

If $d(v)\geq 5$, then there must exist at least two neighbors $v',v''$ of $v$ which are distance $3$ from $w$ (and thus $f(v')=f(v'')=2$). Furthermore, there exists at most one vertex $y$ which is in $(N(v)\cap N(w))\backslash \{u\}$ (and thus $f(y)=0$). Therefore,
$$\kappa_G(x,y) \leq -\Delta f(v)\leq - \frac{1}{d(v)}\lp -1-1+2\rp\leq 0,$$
giving a contradiction.
\end{proof}

\begin{lemma}\label{lem:4-facical-cycle_structure}
Let $G$ be a  $2$-connected positively curved outerplanar graph with outer cycle $C$. Suppose $G$ is not a maximal outerplanar graph and $G$ is not a cycle. Let $D$ be a facial cycle of length at least $4$ of $G$  such that $D\ne C$. Then $D$ has length exactly $4$, and one of the following holds.
\begin{itemize}
\item [(i)] Exactly three consecutive edges of $D$ are exterior edges.
\item [(ii)] Exactly two consecutive edges of $D$ are exterior edges.
\item [(iii)] All edges of $D$ are interior edges and $G$ is an $8$-vertex graph $G_8$ (shown in Figure~\ref{fig:G8}).
\end{itemize}
\end{lemma}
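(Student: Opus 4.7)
The approach is to split according to the number and arrangement of exterior edges of $D$, using Lemmas~\ref{lem:exterior_at_least_3} and~\ref{lem:deg2_vtx} as the main tools. A key observation is that since $G$ is $2$-connected outerplanar, every vertex lies on $C$, and two vertices share a common neighbor if and only if they lie on a common triangular face.

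First I would show $|D|=4$. Suppose $|D|\ge 5$. If $D$ has an exterior edge $uv$, then the two faces containing $uv$ are $C$ and $D$, both of length $\ge 4$, so $N(u)\cap N(v)=\emptyset$. By Lemma~\ref{lem:exterior_at_least_3}, some endpoint, say $u$, has degree $2$; its two neighbors are its $C$-neighbors $v,w$, which coincide with its $D$-neighbors. If $vw\in E(G)$, the triangle $uvw$ is a face containing $u$, but $u$ lies on only $C$ and $D$, forcing one of these to be this triangle, contradicting their lengths. Otherwise $vw\notin E(G)$ and Lemma~\ref{lem:deg2_vtx} yields $y\in N(v)\cap N(w)$; the 4-cycle $uvywu$ (with $u$ of degree $2$ and $vw,uy\notin E(G)$) bounds a facial 4-cycle containing both $D$-edges at $u$, which must therefore equal $D$, contradicting $|D|\ge 5$. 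If all edges of $D$ are chords, every vertex of $D$ has degree $\ge 4$; for any edge $uv$ of $D$ I would construct a $1$-Lipschitz function $f$ with $f(u)=0$, $f(v)=1$, assigning $-1$ to the $C$-neighbors and the other chord-$D$-neighbor of $u$ and $2$ (or the largest Lipschitz-feasible value) to the corresponding neighbors of $v$. This gives $\Delta f(u)-\Delta f(v)\le 0$, hence $\kappa(u,v)\le 0$ by Theorem~\ref{thm:curvature_laplacian}, contradicting positive curvature.

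Next I classify the patterns for $|D|=4$. Any exterior edge of $D$ still has both faces (namely $C$ and $D$) of length $\ge 4$, so its endpoints share no common neighbor, and Lemma~\ref{lem:exterior_at_least_3} forces a degree-$2$ endpoint. Such a degree-$2$ vertex on $D$ has both edges on $C$, hence both its $D$-edges are exterior. So exterior $D$-edges come in consecutive blocks, which rules out the patterns ``one exterior edge'' and ``two non-consecutive exterior edges'': in each such configuration the endpoint of an exterior edge that is also incident to a chord of $D$ must have degree $\ge 3$, contradicting the degree-$2$ requirement. Together with $D\ne C$ excluding the all-exterior case, the only remaining patterns are (i), (ii), and (iii).

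Finally, for case (iii), label $D=v_1v_2v_3v_4$, let $a_{i,1},\dots,a_{i,m_i}$ (with $m_i\ge 1$) be the intermediate $C$-vertices between $v_i$ and $v_{i+1}$ (indices mod $4$), and note $d(v_i)\ge 4$ for each $i$. The goal is to show $m_i=1$ for every $i$ and that each $a_{i,1}$ has degree $2$ with no other chords, yielding $|V(G)|=8$ and $G=G_8$. To show $m_i=1$, assume $m_i\ge 2$ and analyze the faces and edges adjacent to the arc $v_i\,a_{i,1}\,a_{i,2}\,\dots\,v_{i+1}$; depending on whether intermediate vertices carry additional chords and whether the face adjacent to the chord $v_iv_{i+1}$ opposite $D$ is a triangle or a longer face (to which the already-proved classification applies), Lemmas~\ref{lem:exterior_at_least_3} and~\ref{lem:deg2_vtx} combined with $d(v_i)\ge 4$ force either an impossible common neighbor of $v_i$ and $v_{i+1}$ (since both faces containing $v_iv_{i+1}$ would have length $4$) or a direct violation of Lemma~\ref{lem:deg2_vtx}'s conditions. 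Analogous arguments exclude extra chords at intermediate vertices or at the $v_i$'s, giving $G=G_8$.

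The main obstacle is case (iii): the recursive application of the classification to the faces adjacent to $D$, combined with detailed degree bookkeeping across several sub-cases (depending on which chords are present in the regions outside $D$), is the most delicate part of the argument.
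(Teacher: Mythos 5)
There is a genuine gap in the step that is supposed to rule out $|D|\ge 5$ when all edges of $D$ are interior. The function you describe is not $1$-Lipschitz: you assign $-1$ to the other $D$-neighbor $u'$ of $u$ and $2$ to the other $D$-neighbor $v'$ of $v$, but when $|D|=5$ these two vertices are adjacent, so $|f(u')-f(v')|=3>d(u',v')$. The same problem occurs at the second face $D_1$ containing $uv$: if $D_1$ is a triangle, its apex is a common neighbor of $u$ and $v$ and cannot receive $-1$; if $|D_1|=4$, the $D_1$-neighbors of $u$ and of $v$ are adjacent and cannot receive $-1$ and $2$. The hedge ``largest Lipschitz-feasible value'' does not rescue the claim, because the relevant inequalities are tight: once you are forced to put the other $D$-neighbor and the $D_1$-neighbor of $u$ at value $0$, the best you get is $\kappa(u,v)\le \frac{4}{d(u)}+\frac{4}{d(v)}-2$, which is $0$ (not negative) at degree $4$ and only shows that $D_1$ must be a triangle. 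The paper therefore runs a two-step argument: first it proves that for any interior edge of $D$ with both endpoints of degree at least $4$ the opposite face is a triangle, and only then, using the triangle apex $w$ and the fact that $v_2$ has at most one further neighbor adjacent to $w$ (by outerplanarity), builds a second function giving $\kappa(v_1,v_2)\le \frac{3}{d(v_1)}+\frac{5}{d(v_2)}-2\le 0$. Your single-function shortcut does not close this case. A smaller patchable issue in the same part: when the exterior edge $uv$ of $D$ has $d(u)=d(v)=2$, Lemma~\ref{lem:deg2_vtx}(iii) only yields a cycle of length $4$ \emph{or} $5$, which does not contradict $|D|=5$; you need to choose the exterior edge so that the non-degree-$2$ endpoint exists (possible since $G$ is not a cycle) before invoking parts (i)--(ii) of that lemma.

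Case (iii) is also not actually proved. You outline a recursive analysis of the arcs of $C$ between consecutive $v_i$'s and flag it as the main obstacle, but no concrete argument is given. The paper's route is different and much shorter: the triangle claim established in the first step applies to all four (interior) edges of $D$, since all four vertices then have degree at least $4$; this immediately yields the eight vertices of $G_8$ as an induced subgraph, and a single explicit Lipschitz function (exploiting that an extra neighbor $u_1$ of $v_1$ cannot be adjacent to both $w_1$ and $w_4$ by outerplanarity) shows any additional vertex forces $\kappa(v_1,v_2)\le 0$. Without either that triangle claim or a worked-out version of your arc analysis, the identification of $G$ with $G_8$ remains unproved.
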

\begin{figure}[htb]
 \centering
	\resizebox{4.5cm}{!}{\begin{tikzpicture}[scale=1, Wvertex/.style={circle, draw=black, fill=white, scale=3}, bvertex/.style={circle, draw=black, fill=black, scale=0.3},rvertex/.style={circle, draw=red, fill=red, scale=0.2}]

\node [bvertex, label={[font=\small] above:$v_1$}] (v1) at (-1,1) {};
\node [bvertex, label={[font=\small] above:$v_2$}] (v2) at (1,1) {};
\node [bvertex, label={[font=\small] below:$v_3$}] (v3) at (1,-1) {};
\node [bvertex, label={[font=\small] below:$v_4$}] (v4) at (-1,-1) {};
\node [bvertex, label={[font=\small] above:$w_1$}] (w1) at (0,2) {};
\node [bvertex, label={[font=\small] right:$w_2$}] (w2) at (2,0) {};
\node [bvertex, label={[font=\small] below:$w_3$}] (w3) at (0,-2) {};
\node [bvertex, label={[font=\small] left:$w_4$}] (w4) at (-2,0) {};

\draw (v1) -- (v2);
\draw (v2) -- (v3);
\draw (v3) -- (v4);
\draw (v4) -- (v1);
\draw (v1) -- (w1);
\draw (w1) -- (v2);
\draw (v2) -- (w2);
\draw (w2) -- (v3);
\draw (v3) -- (w3);
\draw (w3) -- (v4);
\draw (v4) -- (w4);
\draw (w4) -- (v1);

\end{tikzpicture}	}%
	\caption{$G_8$.}
    \label{fig:G8}
    \end{figure}
\begin{proof} Let $D:=v_1v_2\cdots v_k v_1$ for some integer $k\ge 4$. Let $e_i:= v_iv_{i+1}$ for each $i\in [k-1]$ and $e_k:=v_kv_1$. For simplicity, let $v_{k+1}=v_1, e_0=e_k$ and $e_{k+1}=e_1$. 
 Since $G$ is $2$-connected, $e_i$ is contained in exactly two facial cycles, $D$ and another cycle $D_i$. We show that if $e_i=v_iv_{i+1}$ is an interior edge and $d(v_i)\ge 4, d(v_{i+1})\ge 4$ for some $i\in [k]$, then $D_i$ is a triangle. Without loss of generality, we may assume that $e_1=v_1v_2$ is an interior edge and $d(v_1)\ge 4, d(v_2)\ge 4$, and we need to show $D_1$ is a triangle. Suppose that $D_1$ has length at least four. Let $u_j$ be the neighbor of $v_j$ in $D_1$ such that $u_j\ne v_{3-j}$ for $j=1,2$. Note that $u_1\ne u_2$ as $|V(D_1)|\ge 4$. Consider the edge $v_1v_2$ and a $1$-Lipschitz function $f:N[v_1]\cup N[v_2]\to \mathbb{R}$ as follows.
    $$
     f(x):=\begin{cases}
               -1 & \textrm{ if $x\in N(v_1)\backslash \{v_2, u_1, v_k\}$},\\
                0 & \textrm{ if $x\in \{v_1,u_1,v_k\}$},\\
                1 &\textrm{ if $x\in \{v_2, u_2, v_3\}$},\\
                2 & \textrm{ if $x\in N(v_2)\backslash \{v_1, u_2, v_3\}$.}
           \end{cases}$$
 Note that $\nabla_{v_2v_1} f=1, \Delta f(v_1)=\frac{1}{d(v_1)}(-(d(v_1)-3)+1)=\frac{4}{d(v_1)}-1$, and $\Delta f(v_2)=\frac{1}{d(v_2)}(-1+(d(v_2)-3)=-\frac{4}{d(v_2)}+1$. By Theorem~\ref{thm:curvature_laplacian}, $$\kappa(v_1,v_2)\le \Delta f (v_1)-\Delta f (v_2)=\frac{4}{d(v_1)}+\frac{4}{d(v_2)}-2 \le \frac{4}{4}+\frac{4}{4}-2=0,$$ a contradiction. Hence $D_1$ is a triangle. Next we claim that if $e_i=v_iv_{i+1}$ is an exterior edge for some $i\in [k]$, then either $e_{i-1}\in E(C)$ or $e_{i+1}\in E(C)$. Suppose $e_{i-1}, e_{i+1}$ both are interior edges. Then $d(v_i)\ge 3$ and $d(v_{i+1})\ge 3$. Since $v_i, v_{i+1}$ have no common neighbor (as $v_i v_{i+1}$ is in $D$ and is an exterior edge), it follows from Lemma~\ref{lem:exterior_at_least_3} that the edge $e_i=v_iv_{i+1}$ has non-positive LLY curvature. Thus, either $e_{i-1}\in E(C)$ or $e_{i+1}\in E(C)$, and this implies that either $d(v_i)=2$ or $d(v_{i+1})=2$.

First we show that $k=4$. Suppose the length of $D$ is at least five. Since $G$ is not a cycle, there exists some $v_i\in V(D)$ such that $d(v_i)>2$. We may assume $v_1$ has degree at least three. We claim that $d(v_2)\ge 3$. If $v_2$ has degree two, then $v_1,v_2$ are contained in a $4$-cycle by Lemma~\ref{lem:deg2_vtx} as $\kappa(v_1,v_2)>0$, contradicting that $D$ has length at least five. Thus, $v_2$ has degree at least three. Similarly, we have $d(v_i)\ge 3$ for all $i\in [k]$. Then it follows from the claims above that no edge in $D$ is an exterior edge as no vertices in $D$ have degree $2$, i.e., all edges of $D$ are interior edges. Since $G$ is outerplanar, we then obtain that all vertices in $D$ have degree at least four in $G$. Consider the edge $e_1=v_1v_2$, and we may assume that $4\le d(v_1)\le d(v_2)$. Recall that $e_1=v_1v_2$ is contained in exactly two facial cycles, $D$ and $D_1$. We know that $D_1$ is a triangle as $\kappa(v_1, v_2)>0$. Let $D_1=v_1 w v_2 v_1$. Since $G$ is outerplanar, $v_2$ has at most one neighbor in $N(v_2)\backslash \{v_1, v_3, w\}$ that is adjacent to $w$. Consider the edge $v_1v_2$ and a $1$-Lipschitz function 
$f:N[v_1]\cup N[v_2]\to \mathbb{R}$ as follows.
    $$
     f(x):=\begin{cases}
               -1 & \textrm{ if $x\in N(v_1)\backslash \{v_2,w\}$},\\
                0 & \textrm{ if $x\in \{v_1,w\}$},\\
                1 &\textrm{ if $x\in \{v_2, v_3\}$ or $x\in N(v_2)\cap N(w)\backslash \{v_1\}$},\\
                2 & \textrm{ otherwise.}
           \end{cases}$$

It follows that 
\begin{align*}
\kappa(v_1,v_2) & \le  \Delta f(v_1)-\Delta f(v_2)\\
&=\frac{1}{d(v_1)}\lp -(d(v_1)-2)+1 \rp -\frac{1}{d(v_2)}\lp -2+(d(v_2)-3)-\mathbbm{1}_{(N(v_2)\cap N(w))\backslash \{v_1\}\neq\emptyset}\rp\\
&= \frac{3}{d(v_1)} +\frac{5+\mathbbm{1}_{(N(v_2)\cap N(w))\backslash \{v_1\}\neq\emptyset}}{d(v_2)}-2
\end{align*}
Hence if $(N(v_2)\cap N(w))\backslash \{v_1\}=\emptyset$, then 
$$\kappa(v_1,v_2)\leq  \frac{3}{d(v_1)}+\frac{5}{d(v_2)}-2 \le \frac{3}{4}+\frac{5}{4}-2=0,$$
giving a contradiction; otherwise $(N(v_2)\cap N(w))\backslash \{v_1\}\ne \emptyset$ implies that $d(v_2)\ge 5$ and 
$$\kappa(v_1,v_2)\leq  \frac{3}{d(v_1)}+\frac{6}{d(v_2)}-2 \le \frac{3}{4}+\frac{6}{5}-2<0,$$
giving a contradiction. Therefore, $D$ has length exactly four, i.e., $k=4$.

Recall that if $e_i\in E(D)$ is an exterior edge for some $i\in [4]$ then either $e_{i-1}$ is an exterior edge or $e_{i+1}$ is an exterior edge. This implies that if some edge in $D$ is an exterior edge, then $D$ has at least two consecutive edges that are exterior edges. Since $G$ is not a cycle, $D$ has at most three edges that are exterior edges. Therefore, there are two or three consecutive edges of $D$ contained in $E(C)$ or no edge of $D$ is contained in $E(C)$. Suppose no edge of $D$ is an exterior edge. Then all edges of $D$ are interior edges and all vertices of $D$ have degree at least four. Recall that for each $i\in [4]$, $e_i=v_iv_{i+1}$ is contained in exactly two facial cycles, $D$ and the triangle $D_i=v_i w_i v_{i+1} v_i$. Let $S=\{v_1, v_2, v_3, v_4, w_1, w_2, w_3, w_4\}$. Then $G[S] \cong G_8$. Suppose $G\ne G_8$. This implies $V(G)\backslash S \ne \emptyset$. Without loss of generality, we may assume $v_1$ has a neighbor $u_1$ such that $u_1\notin S$. As $G$ is outerplanar, either $u_1w_1\notin E(G)$ or $u_1 w_4\notin E(G)$, and we may assume that $u_1w_1\notin E(G)$. Consider the edge $v_1v_2$ and the $1$-Lipschitz function $f:N[v_1]\cup N[v_2]\to \mathbb{R}$ as follows.
    $$
     f(x):=\begin{cases}
               -1 & \textrm{ if $x\in \{u_1, w_4\}$},\\
                0 & \textrm{ if $x\in N[v_1]\backslash \{u_1, w_4, w_1, v_2\}$},\\
                1 &\textrm{ if $x\in N[v_2]\backslash \{v_1, w_2\}$},\\
                2 & \textrm{ if $x=w_2$}.
           \end{cases}$$
Observe that $f (w_1)=1, \Delta f (v_1)=\frac{1}{d(v_1)}(-2+2)=0$, and $\Delta f (v_2)=-\frac{1}{d(v_2)}(-1+1)=0$. It follows from Theorem~\ref{thm:curvature_laplacian} that $\kappa(v_1, v_2)\le \Delta f (v_1)-\Delta f (v_2)=0$, a contradiction. Therefore, $G\cong G_8$.
\end{proof}

\begin{lemma}\label{lem:suppressing}
Let $G$ be a $2$-connected outerplanar graph. Let $v_0$ be a degree two vertex contained in a facial cycle $C:=v_0 v_1 v_2 v_3 v_0$ of length $4$ in $G$. Let $G'$ be the graph obtained from $G$ by suppressing $v_0$, i.e., removing $v_0$ from $G$ and adding $v_1 v_3$ as an edge $(G'=G-v_0+v_1v_3)$. For every $xy \in E(G')\backslash \{v_1 v_3\}$, $\kappa_{G'}(x,y)\geq \kappa_G(x,y)$.
\end{lemma}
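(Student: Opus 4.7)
The plan is to invoke the Laplacian characterization of LLY curvature (Theorem~\ref{thm:curvature_laplacian}): starting from an arbitrary $1$-Lipschitz function $f' : V(G') \to \mathbb{R}$ with $\nabla_{yx} f' = 1$, I will construct a $1$-Lipschitz function $f : V(G) \to \mathbb{R}$ with $\nabla_{yx} f = 1$ such that
\[
   \Delta_G f(x) - \Delta_G f(y) \;=\; \Delta_{G'} f'(x) - \Delta_{G'} f'(y).
\]
Since $xy$ lies in $E(G) \cap E(G')$, we have $d_G(x,y) = d_{G'}(x,y) = 1$, so taking the infimum over all admissible $f'$ will give $\kappa_G(x,y) \le \kappa_{G'}(x,y)$, as required.

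Define the lift by $f(v) := f'(v)$ for $v \in V(G) \setminus \{v_0\}$ and $f(v_0) := t$, where $t \in \mathbb{R}$ is to be specified. The crucial structural observation is that because $C = v_0 v_1 v_2 v_3 v_0$ is a facial cycle of the outerplanar graph $G$, the edge $v_1 v_3$ is not in $E(G)$ (such a chord would split the face $C$ in the embedding), and hence $v_1 v_3$ is a genuinely new edge of $G'$. In particular $|f'(v_1) - f'(v_3)| \leq d_{G'}(v_1, v_3) = 1$, so any choice
\[
    t \in \bigl[\max\{f'(v_1), f'(v_3)\} - 1,\; \min\{f'(v_1), f'(v_3)\} + 1\bigr]
\]
satisfies $|t - f'(v_1)| \leq 1$ and $|t - f'(v_3)| \leq 1$. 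For such $t$, the edge-Lipschitz inequality $|f(u) - f(w)| \le 1$ holds for every $uw \in E(G)$: any such edge other than $v_0 v_1$ or $v_0 v_3$ lies in $E(G')$ (where $f$ matches $f'$), while the two edges at $v_0$ are controlled by the bounds on $t$. Thus $f$ is $1$-Lipschitz on $G$, and $\nabla_{yx} f = f(y) - f(x) = f'(y) - f'(x) = 1$ since $x, y \ne v_0$.

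To compare the Laplacians, observe that for every $u \in V(G) \setminus \{v_0, v_1, v_3\}$ we have $N_G(u) = N_{G'}(u)$ and $f \equiv f'$ on $N_G[u]$, so $\Delta_G f(u) = \Delta_{G'} f'(u)$. At $v_1$ the neighborhoods differ only by swapping $v_3 \in N_{G'}(v_1)$ for $v_0 \in N_G(v_1)$, with $d_G(v_1) = d_{G'}(v_1)$, so a direct computation gives
\[
   \Delta_G f(v_1) - \Delta_{G'} f'(v_1) = \frac{t - f'(v_3)}{d(v_1)},
\]
and symmetrically $\Delta_G f(v_3) - \Delta_{G'} f'(v_3) = (t - f'(v_1))/d(v_3)$. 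Since the lemma excludes the edge $v_1 v_3$, at most one of $v_1, v_3$ lies in $\{x, y\}$. I set $t := f'(v_3)$ if $v_1 \in \{x,y\}$, $t := f'(v_1)$ if $v_3 \in \{x,y\}$, and any admissible $t$ otherwise. Then the only potentially nonzero contribution to $\Delta_G f - \Delta_{G'} f'$ lies at the vertex in $\{v_1,v_3\} \setminus \{x,y\}$, which does not enter $\Delta_G f(x) - \Delta_G f(y)$ or $\Delta_{G'} f'(x) - \Delta_{G'} f'(y)$. The claimed equality follows.

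The main obstacle is verifying that the lift $f$ remains $1$-Lipschitz on $G$; this hinges on the inequality $|f'(v_1) - f'(v_3)| \le 1$, which is available precisely because $v_1 v_3 \in E(G')$---a consequence of the hypothesis that $C$ is a facial $4$-cycle of the outerplanar graph $G$. Everything else is a bookkeeping calculation of how a single local modification to the vertex set affects $\Delta f$ at the two endpoints of the swapped edges.
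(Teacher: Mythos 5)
Your proposal is correct and follows essentially the same route as the paper: take a $1$-Lipschitz $f'$ attaining $\kappa_{G'}(x,y)$, extend it to $G$ by assigning a suitable value to $v_0$ (the paper uses $f(v_0):=f'(v_3)$ when $x=v_1$, exactly your choice of $t$), and conclude via the Laplacian characterization of Theorem~\ref{thm:curvature_laplacian}. Your write-up is in fact more explicit than the paper's about why the lift stays $1$-Lipschitz (using $|f'(v_1)-f'(v_3)|\le 1$ from the new edge $v_1v_3$) and why the Laplacian difference at $x$ and $y$ is unchanged, but the underlying argument is the same.
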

\begin{proof}
Let $xy$ be an edge in $E(G')\backslash \{v_1v_3\}$.
Suppose $f':N_{G'}[x]\cup N_{G'}[y]\to \mathbb{R}$ is a $1$-Lipschitz function with $\nabla_{yx} f'=1$ that attains $\kappa_{G'}(x,y)$. If $\{x,y\}\cap \{v_1,v_3\} =\emptyset$, 
then it is not hard to see that $f'$ can also be made a $1$-Lipchitz function for $xy$ in $G$. It follows that 
$$\kappa_G(x,y) =\inf_{\substack{f \in Lip(1)\\ \nabla_{yx}f = 1}} \nabla_{xy} \Delta f \leq \nabla_{xy} \Delta f' =\kappa_{G'}(x,y).$$
Now without loss of generality suppose that $x=v_1$, and $f':V(G')\to \mathbb{R}$ is a $1$-Lipschitz function in $G'$ with $\nabla_{yx} f'=1$ that attains $\kappa_{G'}(x,y)$. Then we can define a $1$-Lipschitz function in $G$ with $\nabla_{yx} f=1$ as follows: let $f(v_0)=f'(v_3)$, and for every vertex $v\in V(G)\backslash \{v_0\}=V(G')$, let $f(v) = f'(v)$. Then it is not hard to see that $f$ is also $1$-Lipschitz and $\nabla_{yx}=1$. Hence similar to before, we have
$$\kappa_G(x,y) =\inf_{\substack{f \in Lip(1)\\ \nabla_{yx}f = 1}} \nabla_{xy} \Delta f \leq \nabla_{xy} \Delta f' =\kappa_{G'}(x,y).\qedhere$$
\end{proof}

% \begin{lemma}
%     Suppose $G$ is a positively curved outerplanar graph with $\delta(G)\ge 2$ and $G$ is connected but not $2$-connected. Then $G$ is one of the following graphs.
% \end{lemma}

\section{Proof of Theorems \ref{thm:outerplanr_2_connected_max_order} and \ref{thm:outerplanar_max_order}}\label{sec:max_order_outerplanar}

\begin{proof}[Proof of Theorem~\ref{thm:outerplanr_2_connected_max_order}]
For $n\leq 10$, we classify all $2$-connected positively LLY-curved outerplanar graphs by computer search. 
For $n\geq 11$, we show that there are no $2$-connected positively LLY-curved outerplanar graphs on $n$ vertices by induction on $n$. The base case for $n=11$ is verified by computer search.

Suppose for contradiction that $G$ is a  $2$-connected positively LLY-curved outerplanar graph on $n\geq 12$ vertices with outer cycle $C$. If $G$ is maximally outerplanar, then we are done by Theorem \ref{thm:max_outerplanar}. We may now assume that $G$ is not maximally outerplanar. We claim that $G$ is not a cycle since a cycle on at least $6$ vertices is not positively LLY-curved. Thus, $G$ contains a facial cycle $D$ of length at least $4$ that is not the outer cycle of $G$. Then by Lemma \ref{lem:4-facical-cycle_structure}, $|V(D)|=4$, i.e., $D$ is a facial cycle of length exactly $4$. Since $|V(G)|\ge 12$, $G\ne G_8$ and thus $D$ has two or three consecutive edges that are exterior edges. This implies that $D$ has some vertex of degree $2$. Let $D:=v_0 v_1 v_2 v_3 v_0$ with $v_0$ being a degree two vertex.   It follows from Lemma \ref{lem:4-facical-cycle_structure} that there are two cases for the edges of $D$:
\begin{description}
    \item Case 1: Exactly three edges in $D$ are exterior edges of $G$, which (without loss of generality) are $v_3v_0, v_0v_1$ and $v_1v_2$. This implies that $d(v_1)=2$, $d(v_2)\ge 3$, $d(v_3)\ge 3$. By Lemma \ref{lem:deg2_vtx}, $\max\{d(v_2), d(v_3)\} \leq 4$.
    \item Case 2: Exactly two edges in $D$ are exterior edges of $G$ which are $v_0v_1$ and $v_0v_3$. This implies that $d(v_1)\ge 3$ and $d(v_3)\ge 3$. By Lemma \ref{lem:deg2_vtx}, $\max\{d(v_1),d(v_3)\}\leq 4$.
\end{description}

We first claim that in Case 2, $v_1$ and $v_3$ cannot both have degree $4$. Suppose for contradiction that $d(v_1) = d(v_3)=4$. We will show that $G$ is then not positively curved. By Lemma \ref{lem:deg2_vtx}, there exist another a vertex $v_1'\in N(v_1)\backslash \{v_0\}$ with $v_1'v_2\in E(G)$, and another vertex $v_3'\in N(v_3)\backslash \{v_0\}$ with $v_3'v_2\in E(G)$. Since $d(v_1)= d(v_3) = 4 $, we have that $d(v_1')\geq 3$ and $d(v_3')\geq 3$. Note that $d(v_2)\ge 4$. We claim that the edges $v_1'v_2, v_3'v_2$ both are interior edges. Now suppose either $v_1'v_2$ or $v_3'v_2$ is an exterior edge. Without loss of generality, if  $v_1'v_2$ is an exterior edge then by Lemma \ref{lem:exterior_at_least_3}, $d(v_2)=4$ and $v_2$ has another neighbor other than $v_1'$ that is adjacent to $v_1$, which is impossible due to the outerplanarity of $G$. Then $v_1'v_2$ and $v_3'v_2$ are not exterior edges of $G$. Thus $v_2$ has degree at least $6$, and it follows from Lemma~\ref{lem:exterior_at_least_3} that $N_C(v_2)$ contains two degree $2$ vertices
(call them $v_1''$ and $v_3''$, where $v_i''$ is the degree $2$ vertex in the component containing $v_i$ in $G-\{v_2, v_0\}$ for $i=1,3$). Then consider the edge $v_3v_2$ and the $1$-Lipschitz function $f:N[v_2]\cup N[v_3] \to\mathbb{R}$ with 

  $$
     f(x):=\begin{cases}
               -1 & \textrm{ if $x \in N(v_3)\backslash \{v_0,v_2, v_3'\}$},\\
                0 & \textrm{ if $x\in \{v_0, v_3, v_3'\}$},\\
                1 &\textrm{ if $x\in N[v_2]\backslash \{v_3, v_3',v_1', v_1''\}$},\\
                2 & \textrm{ if $x\in \{v_1',v_1''\}$}.
           \end{cases}$$
It follows that
\begin{align*}
\kappa(v_3,v_2) &\leq \frac{1}{d(v_3)}\lp -1+1\rp-\frac{1}{d(v_2)}\lp -2+2\rp=0,
\end{align*}
contradicting that $G$ is positively curved. Hence we conclude that $d(v_1)$ and $d(v_3)$ cannot both have degree $4$.

Let $G'$ be the graph obtained from $G$ by suppressing $v_0$, i.e., removing $v_0$ from $G$ and adding the edge $v_1 v_3$. It suffices to show that $G'$ is positively curved, which leads to a contradiction by the induction hypothesis. Note that by Lemma \ref{lem:suppressing}, for all edges $xy\in E(G')\backslash \{v_1v_3\}$, $\kappa_{G'}(x,y)\geq \kappa_G(x,y)>0$ since $G$ is assumed to positively curved. Hence it suffices to show that $\kappa_{G'}(v_1, v_3)>0$.

For Case 1, it is straightforward to see that there is a coupling $\sigma$ between $\mu_{v_1}$ and $\mu_{v_3}$ such that $$C(\sigma)\leq (1+2)(c_{v_1}-c_{v_3}).$$
Hence by Lemma \ref{lem:coupling},
\[\kappa_{G'}(v_1, v_3)
\geq 1+ \frac{1}{d_{G'}(v_3)} -\frac{3(c_{v_1}-c_{v_3})}{\lcm(d(v_1),d(v_3))}
=\frac{4}{d_{G'}(v_3)} -\frac{1}{2}>0.
\]
For Case 2, applying Lemma \ref{lem:pos_degree_pair}, we get
$\kappa_{G'}(v_1, v_3)>0$.

Hence we conclude that $G'$ is positively curved, which contradicts the induction hypothesis. This completes the proof of Theorem \ref{thm:outerplanr_2_connected_max_order}.
\end{proof}
\begin{proof}[Proof of Theorem~\ref{thm:outerplanar_max_order}]
We may now assume that $G$ is a positively curved outerplanar graph with $\delta(G)\ge 2$ and $G$ is not $2$-connected. Let $C$ be the outer walk of $G$. Since $G$ is positively curved with $\delta(G)\ge 2$, $G$ has no cut edge and every block of $G$ is $2$-connected. Let $x\in V(G)$ be a cut vertex of $G$. Then $x$ is contained in at least two blocks of $G$ and $x$ has degree at least four as $|N_C(x)|\ge 4$. Note that for every $v\in N_C(x)$, $v$ has degree two in $G$ by Lemma~\ref{lem:exterior_at_least_3}, and the two neighbors of $v$ are adjacent by Lemma~\ref{lem:deg2_vtx}. Let $v_1, v_2\in N_C(x)$ such that $v_1, v_2$ are contained in different blocks of $G$. Consider the graph $G':=G + v_1v_2$. Observe that $G'$ is still outerplanar. We claim that $G'$ is positively curved. If suffices to show that every edge of $G'$ has positive LLY curvature.

Let $u_i$ denote the neighbor of $v_i$ other than $x$ for $i\in [2]$. Then $u_ix\in E(G)$. For any edge $uv\in E(G')$ that is not incident with $v_1, v_2$, let $f': V(G')\to \mathbb{R}$ be a $1$-Lipschitz function with $\nabla_{vu} f'=1$ that attains $\kappa_{G'}(u,v)$. Observe that $f'$ is also a $1$-Lipchitz function for $uv$ in $G$. It follows that $$0<\kappa_G(u,v)=\inf_{\substack{f \in Lip(1)\\ \nabla_{vu}f = 1}} \nabla_{uv} \Delta f \leq \nabla_{uv} \Delta f' =\kappa_{G'}(u,v).$$ Next we show that for any edge $uv\in \{v_1v_2, v_1x, v_2x, v_1u_1, v_2u_2\}$, $\kappa_{G'}(u,v)>0$. Observe that $d_{G'}(v_1)=d_{G'}(v_2)=3$. Since $v_1, v_2$ have a common neighbor $x$, $\kappa_{G'}(v_1, v_2)>0$ by Lemma~\ref{lem:pos_degree_pair}. For $v_ix\in E(G')$ ($i\in [2]$), note that $v_i$ and $x$ have exactly two common neighbors (i.e., $N_{G'}(v_i)\cap N_{G'}(x)=\{u_i, v_{3-i}\}$) and $x$ has a neighbor $u_{3-i}\notin N_{G'}[v_i]$ adjacent to $v_{3-i}$, which is a common neighbor of $v_i$ and $x$. By Theorem~\ref{thm:maxdeg}, $\Delta(G)\le 9$. Then it follows from Lemma~\ref{lem:pos_degree_pair} that $\kappa_{G'}(v_i,x)>0$ as $d_{G'}(x)=d_G(x)\le \Delta(G)\le 9$. For $v_iu_i\in E(G')$ ($i\in [2]$), $v_i$ and $u_i$ have a common neighbor $x$. 
If $d_{G'}(u_i)=d_G(u_i)\le 3$, then by Lemma \ref{lem:pos_degree_pair} we have $\kappa_{G'}(u_i, v_i)>0$. 
We claim that $d_G(u_i)\le 4$, and if $d_G(u_i)=4$ then $(N_G(u_i)\cap N_G(x))\backslash \{v_i\} \ne \emptyset$ (i.e., $u_i$ has a neighbor $w\notin N[v_i]$ such that $w$ is adjacent to $x$, the common neighbor of $u_i$ and $v_i$). First we show that if $d_G(u_i)\ge 4$ then $(N_G(u_i)\cap N_G(x))\backslash \{v_i\}\ne \emptyset$. Suppose $d_G(u_i)\ge 4$ and $(N_G(u_i)\cap N_G(x))\backslash \{v_i\}=\emptyset$. Choose $u_i'\in N_G(u_i)\backslash \{v_i, x\}$ and $x'\in N_G(x)\backslash \{v_i, u_i\}$ such that $d_G(u_i', x')$ is minimum. Since $(N_G(u_i)\cap N_G(x))\backslash \{v_i\}=\emptyset$, $d_G(u_i', x')\ge 1$. Consider the edge $u_ix\in E(G)$ and the $1$-Lipschitz function $f: N_G[u_i] \cup N_G[x] \to \mathbb{R}$ with
 $$
     f(v):=\begin{cases}
               -1 & \textrm{ if $v\in N_G(u_i)\backslash \{x, v_i, u_i'\}$},\\
                0 & \textrm{ if $v\in \{u_i,  v_i, u_i'\}$},\\
                1 &\textrm{ if $v\in N_G[x]\backslash\{u_1, v_1, v_2, u_2\}$},\\
                2 & \textrm{ if $v\in \{v_{3-i},u_{3-i}\}$}.
           \end{cases}$$
 It follows that 
\begin{align*}
\kappa_G(u_i,x) & \le  \Delta f(u_i)-\Delta f(x)\\
&=\frac{1}{d_G(u_i)}\lp -(d_G(u_i)-3)+1 \rp -\frac{1}{d_G(x)}\lp -2+2\rp\\
&= \frac{4}{d_G(u_i)}-1 \le \frac{4}{4}-1=0, 
\end{align*}
a contradiction. Thus if $d_G(u_i)\ge 4$, then $(N_G(u_i)\cap N_G(x))\backslash \{v_i\}\ne \emptyset$. We show that $d_G(u_i)\le 4$. If $d_G(u_i)\ge 5$, let $w\in N_G(u_i)\cap N_G(x))\backslash \{v_i\}$. Observe that $d_G(x)\ge 5$ as $\{w, u_1, v_1, v_2, u_2\}\subseteq N_G(x)$. Consider the edge $u_ix\in E(G)$ and the $1$-Lipschitz function $f: N_G[u_i] \cup N_G[x] \to \mathbb{R}$ with
 $$
     f(v):=\begin{cases}
               -1 & \textrm{ if $v\in N_G(u_i)\backslash \{x,v_i, w\}$},\\
                0 & \textrm{ if $v\in \{u_i, v_i, w\}$},\\
                1 &\textrm{ if $v\in N_G[x]\backslash\{u_1, v_1, v_2, u_2, w\}$},\\
                2 & \textrm{ if $v\in \{v_{3-i},u_{3-i}\}$}.
           \end{cases}$$
Note that
\begin{align*}
\kappa_G(u_i,x) & \le  \Delta f(u_i)-\Delta f(x)\\
&=\frac{1}{d_G(u_i)}\lp -(d_G(u_i)-3)+1 \rp -\frac{1}{d_G(x)}\lp -3+2\rp\\
&= \frac{4}{d_G(u_i)}+\frac{1}{d_G(x)}-1 \le \frac{4}{5}+\frac{1}{5}-1=0, 
\end{align*}
contradicting that $\kappa_G(u_i,x)>0$. Therefore, we obtain that $d(u_i)\leq 4$ and and if $d_G(u_i)=4$ then $(N_G(u_i)\cap N_G(x))\backslash \{v_i\} \ne \emptyset$.
It then follows from Lemma~\ref{lem:pos_degree_pair} that $\kappa_{G'}(v_i, u_i)>0$. Therefore, $G'=G+v_1v_2$ is a positively curved outerplanar graph with $\delta(G')\ge 2$. If $G'$ is $2$-connected, we know $|V(G)|=|V(G')|\le 10$ by Theorem~\ref{thm:outerplanr_2_connected_max_order}. Suppose $G'$ is not $2$-connected. We repeat the similar process to $G'$ by adding an edge between two specified degree $2$ vertices, and the result graph is still a positively curved outerplanar graph with minimum degree at least two. We repeat adding such edges until we get a $2$-connected positively curved outerplanar graph $H$. Note that $|V(H)|=
|V(G)|$. Therefore, $G$ has at most $10$ vertices as $|V(H)|\le 10$. 
\end{proof}

\section{Appendix}

By utilizing Sagemath and Nauty/geng, we can identify all positively curved outerplanar graphs with a minimum degree of at least 2. Given our proof that the order of these graphs does not exceed 10, we have determined there are 59 such graphs in total. Below, we provide a list of these graphs, with edges labeled according to the LLY Ricci curvatures.

\begin{figure}
    \centering
    \includegraphics[width=1.0\linewidth]{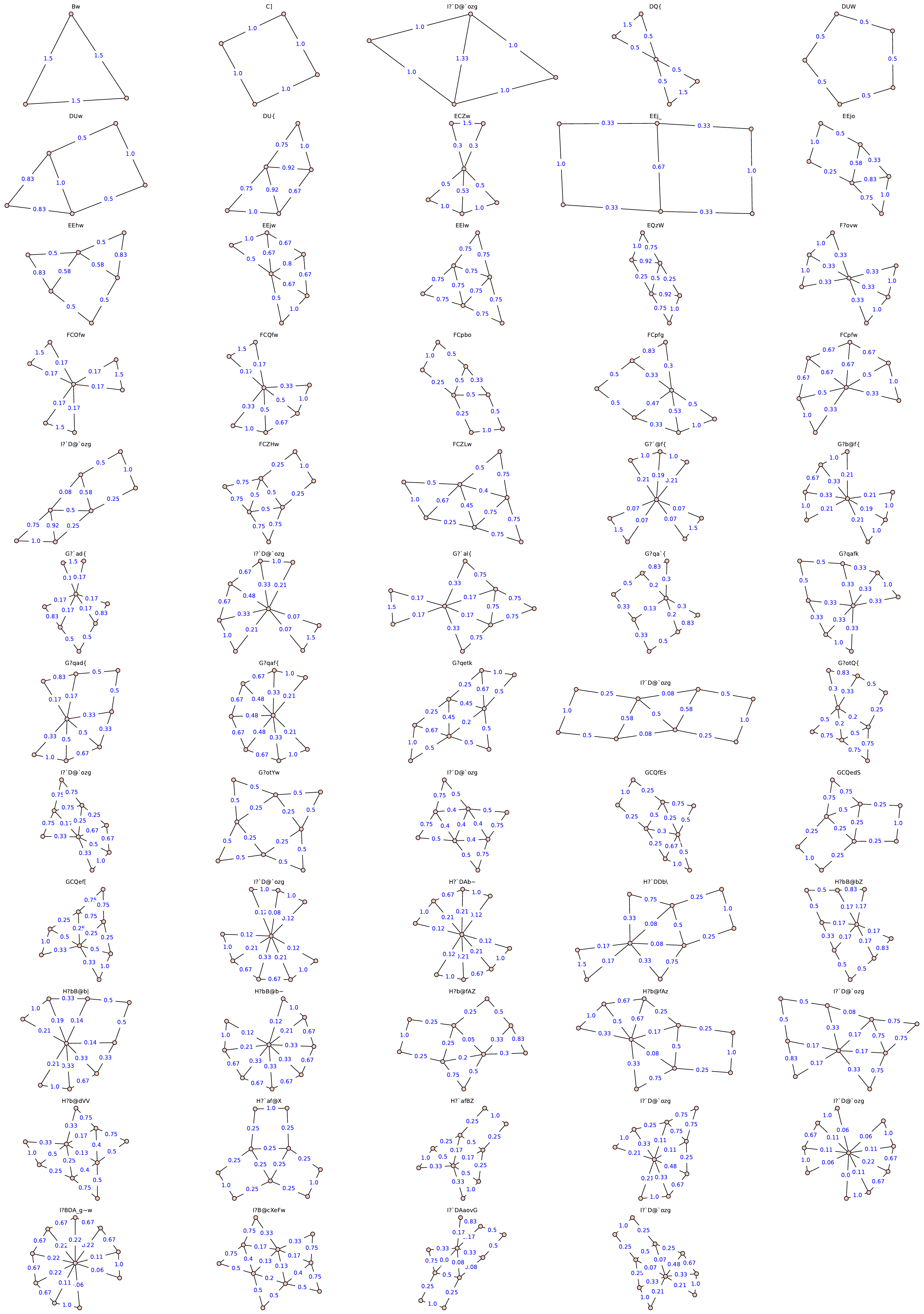}
    \caption{The complete list of positively curved outerplanar graphs with minimum degree 2.}
    \label{fig:enter-label}
\end{figure}

\end{document}